\definecolor{gris1}{cmyk}{0,0,0,0.1}
\definecolor{gris2}{cmyk}{0,0,0,0.3}
\definecolor{gris3}{cmyk}{0,0,0,0.4}
\newtheorem{theorem}{Theorem}
\newtheorem{lemma}[theorem]{Lemma}
\newenvironment{proof}[1][Proof]{\noindent\textbf{#1.} }{\ \rule{0.5em}{0.5em}}
\begin{document}

\title{Sharp Condition for blow-up and global existence in a two species
chemotactic Keller-Segel system in $\mathbb{R}^{2}$}
\author{Elio E. Espejo\thanks{%
Technion - Israel Institute of Technology, 32000 Haifa, Israel; Email:
eespejo@techunix.technion.ac.il}, Karina Vilches\thanks{%
Departamento de Ingenierìa Matemàtica (DIM) and Centro de Modelamiento Matemà%
tico (CMM), Universidad de Chile (UMI CNRS 2807), Casilla 170-3, Correo 3,
Santiago, Chile. Email: kvilches@dim.uchile.cl}, Carlos Conca\thanks{%
Departamento de Ingenierìa Matemàtica (DIM) and Centro de Modelamiento Matemà%
tico (CMM), Universidad de Chile (UMI CNRS 2807), Casilla 170-3, Correo 3,
Santiago, Chile and Institute for Cell Dynamics and Biotechnology: a Centre
for Systems Biology, University of Chile, Santiago, Chile. Email:
cconca@dim.uchile.cl},}
\maketitle

\begin{abstract}
For the parabolic-elliptic Keller-Segel system in $\mathbb{R}^{2}$ it has
been proved that if the initial mass is less than $8\pi/\chi$ global
solution exist and in the case that the initial mass is larger than $%
8\pi/\chi$ blow-up happens. The case of several chemotactic species
introduces an additional question: What is the analog for the critical mass
obtained for the single species system? We find a threshold curve in the
case of two especies case that allows us to determine if the system has
blow-up or has a global in time solution.
\end{abstract}

\section{Introduction}

The Keller-Segel model describes the aggregation of living organisms like
cells, bacteria or amoebae. This is the simplest mechanism of aggregation.
The most famous example in the nature for this type of cells motion is the
Dictyostelium discoideum or Slime mould, this amoebae was discovered by K.
B. Raper in 1935. The slime mould is a unicellular organism that detect a
extracellular signal and transforms it into an intracellular signal. These
signal activates oriented cell movement toward a signal, this is the
aggregation process. The signal is a chemical secreted by themselves, the
chemical is called cyclic Adenosine Monophosphate (cAMP).\bigskip

A classical mathematical model in chemotaxis was introduced by E.F.~Keller
and L.A.~Segel in \cite{KS}. The Keller-Segel model is: 
\begin{equation}  \label{int}
\left. 
\begin{array}{c}
u_{t}=\nabla \cdot (\mu \nabla u-\chi u\nabla v)\text{ \ \ \ \ \ }x\in
\Omega ,\text{ \ \ \ }t>0 \\ 
v_{t}=\gamma \Delta v-\beta v+\alpha u\text{ \ \ \ \ \ \ \ \ \ }x\in \Omega .%
\text{ \ \ \ }t>0,%
\end{array}%
\right.
\end{equation}%
where $u(x,t)$ is the cell density and $v(x,t)$ is the concentration of the
chemical at point $x$ and time $t$ subject to homogeneous Neumann boundary
conditions and positive initial data $u(x,0)=u_{0}$ and $v(x,0)=v_{0}$. In
this model, $\chi $ is the chemotactic sensitivity, $\gamma $ is the
diffusion coefficient of the chemo-attractant and $\mu $ the diffusion
coefficient of the cell density, $\beta $ is the rate of consumption and $%
\alpha $ is the rate of production, all are positive parameters, and $\Omega 
$ $\subset $ $\mathbb{R}^{N}$ has smooth boundary $\partial \Omega .$ It was
conjectured by S.~Childress \& J.K.~Percus~\cite{CH} that in a
two-dimensional domain there exists a critical number $C$ such that if $\int
u_{0}(x)dx<C$ then the solution exists globally in time, and if $\int
u_{0}(x)dx>C$ blow-up happens. For different versions of the Keller-Segel
model the conjecture has been essentially proved, finding the critical value 
$C=8\pi /\chi $; for a complete review of this topic we refer the reader to
the papers \cite{H}, \cite{H2} and the references therein, particularly, 
\cite{BDP}, \cite{Biler}, \cite{JL}, \cite{N} and \cite{VE}.\bigskip

In the case of several chemotactic species a new question arises, namely, 
\textit{Is there a critical curve in the plane of initial masses }$\theta
_{1}\theta _{2}$\textit{\ delimiting on one side global existence and
blow-up on the other side?}. This question was already formulated by G.
Wolansky in \cite{W} and from Theorem 5 of this last paper we readily deduce
the following result\bigskip

\begin{theorem}
Consider the system 
\begin{eqnarray*}
\partial _{t}u_{1} &=&\Delta u_{1}-\chi _{1}\nabla \cdot (u_{1}\nabla v) \\
\partial _{t}u_{2} &=&\mu _{2}\Delta u_{2}-\chi _{2}\nabla \cdot
(u_{2}\nabla v) \\
0 &=&\Delta v+u_{1}+u_{2}-v,
\end{eqnarray*}%
along with Dirichtlet boundary conditions for $v$ and initial radial data: $%
u_{1}(0,\cdot )=\varphi ,$ $u_{2}(0,\cdot )=\psi ,$ $v(0,\cdot )=\phi ,$
with $\varphi ,\psi ,\phi \geq 0$ on the two-dimensional disc of radius $1$.
Further let $\theta _{1},$ $\theta _{2}$ be the total preserved masses of
the chemotactic species. Assume further that%
\begin{equation}
\left. \frac{4\pi \mu \theta _{1}}{\chi _{1}}+\frac{4\pi \theta _{2}}{\chi
_{2}}-\frac{1}{2}(\theta _{1}+\theta _{2})^{2}>0,\text{ \ }\theta _{1}<8\pi
/\chi _{1},\text{ \ \ \ }\theta _{2}<8\pi /\chi _{2}\right.  \label{in}
\end{equation}%
Then for $(u_{1}(0,$\textperiodcentered $),u_{2}(0,$\textperiodcentered $%
))\in Y_{N}$ with 
\begin{equation*}
Y_{N}=\left\{ u_{1},u_{2}:B(0)\rightarrow\mathbb{R}^{+}:\int u_{i}=\theta
_{i},\text{ \ \ }\int_{B_{1}(0)}u_{i}\log u_{i}<\infty \right\}
\end{equation*}%
there exist a global in time classical solution.\bigskip
\end{theorem}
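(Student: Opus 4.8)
The plan is to exploit the gradient-flow structure of the system --- precisely the structure underlying Theorem 5 of \cite{W}, from which the statement is deduced --- combining the monotonicity of the natural free energy with a sharp two-species logarithmic Hardy--Littlewood--Sobolev inequality on the disc. First I would record local well-posedness: writing $v=\mathcal G(u_{1}+u_{2})$, where $\mathcal G=(-\Delta+1)^{-1}$ is the solution operator of the Dirichlet problem on $B_{1}$, the first two equations constitute a quasilinear parabolic system for $(u_{1},u_{2})$ (with no-flux boundary conditions, which is what keeps the masses $\theta_{i}$ conserved), and standard parabolic theory yields a unique classical radial solution on a maximal interval $[0,T_{\max})$ together with the blow-up alternative: if $T_{\max}<\infty$ then $\|u_{1}(t)\|_{L^{\infty}}+\|u_{2}(t)\|_{L^{\infty}}\to\infty$ as $t\uparrow T_{\max}$. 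Writing each equation in the form $\partial_{t}u_{i}=\nabla\!\cdot\!\big(\chi_{i}u_{i}\,\nabla(\tfrac{\mu_{i}}{\chi_{i}}\log u_{i}-v)\big)$ (with $\mu_{1}=1$), a short computation using the self-adjointness of $\mathcal G$ shows that
$$\mathcal F(u_{1},u_{2}):=\sum_{i=1}^{2}\frac{\mu_{i}}{\chi_{i}}\int_{B_{1}}u_{i}\log u_{i}\;-\;\frac12\int_{B_{1}}(u_{1}+u_{2})\,v$$
is a Lyapunov functional,
$$\frac{d}{dt}\mathcal F\big(u_{1}(t),u_{2}(t)\big)=-\sum_{i=1}^{2}\chi_{i}\int_{B_{1}}u_{i}\,\Big|\nabla\big(\tfrac{\mu_{i}}{\chi_{i}}\log u_{i}-v\big)\Big|^{2}\;\le\;0 ,$$
so $\mathcal F(u_{1}(t),u_{2}(t))\le\mathcal F(\varphi,\psi)<\infty$ on $[0,T_{\max})$, the finiteness of the right-hand side being exactly the membership $(\varphi,\psi)\in Y_{N}$.

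The heart of the matter is the lower bound for $\mathcal F$ on the mass class. Using the Green's function $G(x,y)=-\tfrac1{2\pi}\log|x-y|+\gamma(x,y)$ of $-\Delta+1$ on $B_{1}$ with Dirichlet data, where $\gamma$ is smooth and bounded on $\overline{B_{1}}\times\overline{B_{1}}$, one has, with $\rho:=u_{1}+u_{2}$,
$$\mathcal F(u_{1},u_{2})=\sum_{i=1}^{2}\frac{\mu_{i}}{\chi_{i}}\int_{B_{1}}u_{i}\log u_{i}+\frac1{4\pi}\iint_{B_{1}\times B_{1}}\rho(x)\rho(y)\log|x-y|\,dx\,dy\;+\;O\!\big((\theta_{1}+\theta_{2})^{2}\big),$$
the error coming from the smooth part $-\tfrac12\iint\rho\gamma\rho$. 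To the first two terms I would apply the two-species logarithmic HLS inequality (the variational analysis behind \cite{W}; of Shafrir--Wolansky type): restricted to $f_{i}\ge0$ with $\int_{B_{1}}f_{i}=\theta_{i}$, the functional $\sum_{i}a_{i}\int f_{i}\log f_{i}+\frac1{4\pi}\iint(f_{1}+f_{2})(x)(f_{1}+f_{2})(y)\log|x-y|$ is bounded from below exactly when, for every nonempty subset of the species, the associated interior-concentration thresholds hold; for two species these are precisely the constraints in \re{in} (the ``diagonal'' bounds $\theta_{i}<8\pi/\chi_{i}$ governing one species concentrating alone at an interior point, via the Moser--Trudinger constant, and the ``joint'' inequality governing both concentrating simultaneously at a common interior point). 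Since the inequalities in \re{in} are strict, the usual perturbation --- replacing each weight $a_{i}$ by $(1-\delta)a_{i}$ with $\delta>0$ small, which preserves all thresholds --- upgrades this to the quantitative bound
$$\sum_{i=1}^{2}\frac{\mu_{i}}{\chi_{i}}\int_{B_{1}}u_{i}\log u_{i}+\frac1{4\pi}\iint_{B_{1}\times B_{1}}\rho(x)\rho(y)\log|x-y|\,dx\,dy\;\ge\;\varepsilon\sum_{i=1}^{2}\int_{B_{1}}u_{i}\log u_{i}-C(\theta_{1},\theta_{2})$$
for some $\varepsilon>0$. Combined with $\mathcal F(u_{1}(t),u_{2}(t))\le\mathcal F(\varphi,\psi)$, this gives the uniform entropy estimate $\sup_{0\le t<T_{\max}}\sum_{i=1}^{2}\int_{B_{1}}u_{i}(t)\log u_{i}(t)<\infty$.

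From here the argument is routine. The uniform-in-time control of the entropy, together with conservation of mass, is bootstrapped to a uniform-in-time bound on $\|u_{i}(t)\|_{L^{p}(B_{1})}$ for every finite $p$ by the standard hyper-contractivity / Moser-iteration scheme for two-dimensional Keller--Segel systems: one tests the $u_{i}$-equation with $u_{i}^{p-1}$, uses $\Delta v=v-\rho$ and $v\ge0$, and estimates the nonlinear term via Gagliardo--Nirenberg-type inequalities, the uniform entropy bound being exactly what prevents the over-concentration that would otherwise obstruct closing the iteration (cf.\ \cite{BDP}, \cite{Biler}). Parabolic regularity then promotes these $L^{p}$ bounds to $u_{i}\in L^{\infty}\big((0,T)\times B_{1}\big)$ for every $T<T_{\max}$, contradicting the blow-up alternative; hence $T_{\max}=\infty$ and the solution is global.

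I expect the real obstacle to be the sharp lower bound for $\mathcal F$: pinning down the two-species logarithmic HLS inequality on the disc with exactly the thresholds of \re{in} requires identifying all the loss-of-compactness scenarios (each species concentrating alone; the two concentrating jointly), checking that the regular part $\gamma$ of the Dirichlet Green's function and any boundary effects contribute only the harmless $O((\theta_{1}+\theta_{2})^{2})$ term, and then extracting the $\varepsilon$-gained version from the strict inequalities. By contrast, the Lyapunov identity and the passage from the entropy bound to an $L^{\infty}$ bound are computational adaptations of the single-species theory.
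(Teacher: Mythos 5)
The paper offers no proof of this theorem: it is stated as a direct consequence of Theorem 5 of Wolansky \cite{W}, so there is no internal argument to compare yours against line by line. Your proposal reconstructs, in outline, what is essentially Wolansky's own strategy, and it also parallels the proof the paper gives for its whole-space result (Theorem \ref{th1} and Theorem \ref{global}): a monotone free energy, a multi-species logarithmic HLS / Moser--Trudinger inequality to bound that energy from below on the mass class, an $\varepsilon$-improvement extracted from the strictness of the hypotheses (the analogue of the paper's auxiliary parameters $a>\chi _{1}$, $b>\chi _{2}$ in the proof of Theorem \ref{th1}), and a standard entropy-to-$L^{p}$-to-$L^{\infty }$ bootstrap against the blow-up alternative. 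The architecture is sound; the Lyapunov identity, the decomposition of the Dirichlet Green's function (whose regular part is bounded \emph{above} --- it is not bounded below near the boundary diagonal, but the upper bound is the side you actually use since $\rho \geq 0$), and the closing bootstrap are indeed routine.

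The genuine gap is the one you yourself flag, and it is not a technicality: the sharp two-species inequality on the disc with exactly the thresholds \eqref{in} \emph{is} the theorem, and your sketch asserts it rather than proves it. Worse, as written your inequality step never uses the radial hypothesis, yet something must: for general densities on a bounded domain the loss-of-compactness scenario in which one species concentrates at a \emph{boundary} point carries the threshold $4\pi /\chi _{i}$, not $8\pi /\chi _{i}$, so the conditions \eqref{in} would not by themselves give a lower bound. You need to invoke one of two mechanisms explicitly: either restrict the variational problem to radial profiles (so that concentration can only occur at the origin, restoring the $8\pi $ constants), or argue that the Dirichlet condition for $v$, which forces the Green's function to vanish at the boundary, makes boundary concentration energetically harmless. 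Until one of these is made precise, the central lower bound for $\mathcal{F}$ --- and hence the uniform entropy estimate --- is unsupported. Everything downstream of that bound is fine.
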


A natural question arise from this last result, What happens in case
inequalities \ref{in} does not hold? Is it still possible to have global
solutions? With regard to this question it is worth to recall here a result
from [C. Conca, E. Espejo, K. Vilches, \cite{CEV}] who considered the
following system in the whole space in two dimensions,: 
\begin{equation}
\left. 
\begin{array}{l}
\partial _{t}u_{1}=\mu \Delta u_{1}-\chi _{1}\nabla \cdot (u_{1}\nabla v) \\ 
\partial _{t}u_{2}=\Delta u_{2}-\chi _{2}\nabla \cdot (u_{2}\nabla v) \\ 
v(x,t)=-\frac{1}{2\pi }\int_{\mathbb{R}^{2}}\log \left\vert x-y\right\vert
\left( u_{1}(y,t)+u_{2}(y,t)\right) dy \\ 
u_{1}(x,0)=u_{10}\geq 0,\text{ }u_{2}(x,0)=u_{20}\geq 0,%
\end{array}%
\right\}  \label{s}
\end{equation}%
where $t\geq 0$, $u_{1}$ and $u_{2}$ are the density variables for the two
different chemotaxis species and $v$ is the chemoattractant, $\chi _{1},\chi
_{2}$, $\mu $ are positive constants and positive initial conditions $%
u_{10},u_{20}$ are given. In this last paper it was proved that if $\theta
_{1},$ $\theta _{2}$ satisfies \textit{any} of the inequalities, 
\begin{equation*}
\frac{4\pi \mu \theta _{1}}{\chi _{1}}+\frac{4\pi \theta _{2}}{\chi _{2}}-%
\frac{1}{2}(\theta _{1}+\theta _{2})^{2}<0,\text{ \ \ }\theta _{1}>\mu \frac{%
8\pi }{\chi _{1}},\text{ \ \ \ }\theta _{2}>\frac{8\pi }{\chi _{2}},
\end{equation*}%
then system \ref{s} can blow-up. For the global existence was proved also in 
\cite{CEV} that the inequalities 
\begin{eqnarray*}
\theta _{1}+\theta _{2} &<&\frac{8\pi }{\chi _{2}},\text{ \ \ \ }\mu <1 \\
\theta _{1}+\theta _{2} &<&\frac{8\pi }{\chi _{2}}\mu ,\text{ \ }\mu >1
\end{eqnarray*}%
guarantees global existence.\bigskip

In the present paper we aim to give a step further improving the results of
global existence from \cite{CEV} and to prove that even in the \textit{%
non-radial case} inequalities \eqref{in} also guarantees global existence
for system \eqref{s}. In consequence we give a generalization of the
threshold number $8\pi /\chi $ for the classical parabolic-elliptic
Keller-segel system in $%
\mathbb{R}
^{2}$ to a curve for the two species system. The global existence in time
results of the present paper along with the blow-up results from \cite{CEV}
are summaries in Figure \ref{Graf30}.

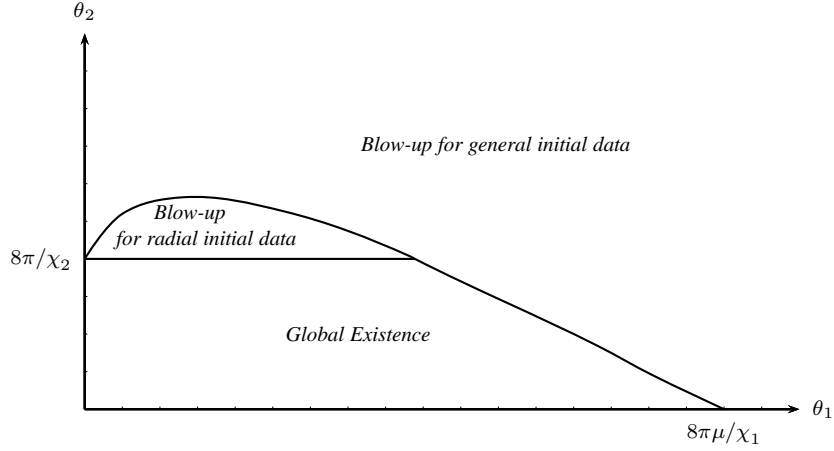
\begin{figure}[htbp]
\label{Graf30}
\par
\begin{center}
\psset{unit=0.5cm} 
\begin{pspicture}(0,-0.5)(19,11)
\pscustom[linestyle=none]{
\pscurve(8.78,4)(13.65,1.68)(15.05,0.93)(17,0)
\psline(17,0)(0,0)
\psline(0,0)(0,4)
\psline(0,4)(8.78,4)}
\pscustom[linestyle=none]{
\pscurve[linestyle=dashed](0,4)(1,5.2)(2.62,5.64)(5.28,5.27)(8.67,4.06)(8.78,4)
\psline(8.78,4)(0,4)}
\pscustom[linestyle=none]{
\pscurve(0,4)(1,5.2)(2.62,5.64)(5.28,5.27)(8.67,4.06)(8.78,4)(13.65,1.68)(15.05,0.93)(17,0)
\psline(17,0)(19,0)
\psline(19,0)(19,10)
\psline(19,10)(0,10)
\psline(0,10)(0,4)}
\psaxes[labels=none,ticksize=1pt]{->}(0,0)(19,10)
\pscurve(0,4)(1,5.2)(2.62,5.64)(5.28,5.27)(8.67,4.06)(8.78,4)(13.65,1.68)(15.05,0.93)(17,0)
\psline(0,4)(8.78,4)
\uput[u](0,10){\footnotesize{$\theta_2$}}
\uput[r](19,0){\footnotesize{$\theta_1$}}
\uput[r](1.5,5.2){\textit{\footnotesize Blow-up}}
\uput[r](0.5,4.5){\textit{\footnotesize for radial initial data}}
\uput[r](5,2){\textit{\footnotesize Global Existence}}
\uput[r](7,7){\textit{\footnotesize Blow-up for general initial data}}
\uput[d](17,0){\footnotesize {$8\pi\mu/\chi_1$}}
\uput[l](0,4){\footnotesize {$8\pi/\chi_2$}}
\end{pspicture}
\end{center}
\caption{Regions of global existence in time and blow-up}
\end{figure}

\section{Preliminaries}

Let us proceed formally to find a free energy functional to our system.
First we write the equation for $u_{1}$ in (\ref{s}) in the form,%
\begin{equation}
\partial _{t}u_{1}=\nabla \cdot u_{1}\nabla \left( \mu \log u_{1}-\chi
_{1}v\right) .  \label{a1}
\end{equation}%
Next, we multiply both sides of (\ref{a1}) by $\mu \log u_{1}-\chi _{1}v$
and integrate to obtain,%
\begin{equation}
\int_{\mathbb{R}^{2}}u_{1t}\left( \mu \log u_{1}-\chi _{1}v\right) dx=\int_{%
\mathbb{R}^{2}}\left( \mu \log u_{1}-\chi _{1}v\right) \nabla \cdot
u_{1}\nabla \left( \mu \log u_{1}-\chi _{1}v\right) dx,  \label{a2}
\end{equation}%
Then using mass conservation and integrating by parts we see that (\ref{a2})
is equivalent to,%
\begin{equation}
\frac{d}{dt}\int_{\mathbb{R}^{2}}\mu u_{1}\log u_{1}dx-\chi _{1}\int_{%
\mathbb{R}^{2}}u_{1t}vdx=-\int_{\mathbb{R}^{2}}u_{1}\left\vert \nabla \left(
\mu \log u_{1}-\chi _{1}v\right) \right\vert ^{2}dx.  \label{a3}
\end{equation}%
Similarly it holds that,%
\begin{equation}
\frac{d}{dt}\int_{\mathbb{R}^{2}}u_{2}\log u_{2}dx-\chi _{2}\int_{\mathbb{R}%
^{2}}u_{2t}vdx=-\int_{\mathbb{R}^{2}}u_{2}\left\vert \nabla \left( \log
u_{2}-\chi _{2}v\right) \right\vert ^{2}dx.  \label{a4}
\end{equation}%
Now we add $\frac{1}{\chi _{1}}$(\ref{a3}) and $\frac{1}{\chi _{2}}$(\ref{a4}%
) to obtain,%
\begin{eqnarray}
&&\frac{d}{dt}\left\{ \int_{\mathbb{R}^{2}}\frac{\mu }{\chi _{1}}u_{1}\log
u_{1}dx+\frac{1}{\chi _{2}}\int_{\mathbb{R}^{2}}u_{2}\log u_{2}dx\right\}
-\int_{\mathbb{R}^{2}}\left( u_{1t}+u_{2t}\right) vdx  \notag \\
&=&-\int_{\mathbb{R}^{2}}u_{1}\left\vert \nabla \left( \mu \log u_{1}-\chi
_{1}v\right) \right\vert ^{2}dx-\int_{\mathbb{R}^{2}}u_{2}\left\vert \nabla
\left( \log u_{2}-\chi _{2}v\right) \right\vert ^{2}dx.  \label{a5}
\end{eqnarray}%
We observe at this point that,%
\begin{eqnarray}
\int_{\mathbb{R}^{2}}\left( u_{1t}+u_{2t}\right) vdx &=&-\frac{1}{2\pi }%
\int_{\mathbb{R}^{2}}\left( u_{1}(x,t)+u_{2}(x,t)\right) _{t}\int_{\mathbb{R}%
^{2}}\log \left\vert x-y\right\vert \left( u_{1}(y,t)+u_{2}(y,t)\right) dydx
\notag \\
&=&-\frac{1}{4\pi }\frac{d}{dt}\int_{\mathbb{R}^{2}\times \mathbb{R}%
^{2}}\left( u_{1}(x,t)+u_{2}(x,t)\right) \left( u_{1}(y,t)+u_{2}(y,t)\right)
\log \left\vert x-y\right\vert dydx  \notag \\
&=&\frac{1}{2}\frac{d}{dt}\int_{\mathbb{R}^{2}}(u_{1}+u_{2})vdx.  \label{a6}
\end{eqnarray}%
In conclusion we deduce from (\ref{a5}) and (\ref{a6}) that,%
\begin{equation}
\frac{d}{dt}\left\{ \int_{\mathbb{R}^{2}}\frac{\mu }{\chi _{1}}u_{1}\log
u_{1}dx+\frac{1}{\chi _{2}}\int_{\mathbb{R}^{2}}u_{2}\log u_{2}dx-\frac{1}{2}%
\int_{\mathbb{R}^{2}}(u_{1}+u_{2})vdx\right\} \leq 0  \label{energy}
\end{equation}%
Result (\ref{energy}) motivate us to define the free energy functional for
system \ref{s} as, 
\begin{equation}
E(t):=\frac{\mu }{\chi _{1}}\int_{\mathbb{R}^{2}}u_{1}\log u_{1}dx+\frac{1}{%
\chi _{2}}\int_{\mathbb{R}^{2}}u_{2}\log u_{2}dx-\frac{1}{2}\int_{\mathbb{R}%
^{2}}u_{1}vdx-\frac{1}{2}\int_{\mathbb{R}^{2}}u_{2}vdx  \label{e}
\end{equation}%
In order to give validity to our calculations we suppose not only that $%
u_{1},u_{2}\in C^{0}(\mathbb{R}^{+},L^{1}(\mathbb{R}^{2}))\cap
L^{2}((0,T);H^{1}(\mathbb{R}^{2}))$ but also that $u_{1}(1+\left\vert
x\right\vert ^{2})$, $u_{2}(1+\left\vert x\right\vert ^{2})$, $u_{1}\log
u_{1}$ and $u_{2}\log u_{2}$ are bounded in $L_{loc}^{\infty }(\mathbb{R}%
^{+},L^{1}(\mathbb{R}^{2}))$. Additionally $\nabla \sqrt{u_{1}}$,$\nabla 
\sqrt{u_{2}}$$\in L_{loc}^{1}(\mathbb{R}^{+},L^{1}(\mathbb{R}^{2}))$ and $%
\nabla v\in L_{loc}^{\infty }(\mathbb{R}^{+}\times \mathbb{R}^{2})$.\newline
Then we have that, 
\begin{equation}
\frac{d}{dt}E(t)=-\frac{1}{\chi _{1}}\int_{\mathbb{R}^{2}}u_{1}\left\vert
\mu \nabla \log u_{1}-\nabla \chi _{1}v\right\vert ^{2}dx-\frac{1}{\chi _{2}}%
\int_{\mathbb{R}^{2}}u_{2}\left\vert \nabla \log u_{2}-\nabla \chi
_{2}v\right\vert ^{2}dx\leq 0.  \label{2}
\end{equation}%
As a consequence of (\ref{2}) and the Hardy-Littlewood-Sobolev inequality (%
\cite{BDP}, \cite{CEV}) was obtained in \cite{CEV} the following entropy
bound,

\begin{theorem}
\label{1} If $u_{1}$ and $u_{2}$ are positive solutions of \eqref{s} on the
interval $[0,T)$ and $\chi _{1}\leq \chi _{2}$ then we have the following
entropy estimates,

\begin{itemize}
\item if $\mu >1$ then

\begin{equation*}
\left( 1-\frac{M\chi _{2}}{8\pi }\right)\int_{0}^{T} \int_{\mathbb{R}%
^{2}}\left( \frac{1}{\chi _{1}}u_{1}(x,t)+\frac{1}{\chi _{2}}%
u_{2}(x,t)\right) \log \left(\frac{1}{\chi _{1}}u_{1}(x,t)+\frac{1}{\chi _{2}%
}u_{2}(x,t)\right) dxdt\leq C_{T};
\end{equation*}

where $C_{T}$ is a constant depending on $T$ and $M=\theta _{1}+\theta _{2}.$

\item If $\mu \leq 1$ then 
\begin{equation*}
\left( 1-\frac{M\chi _{2}}{8\pi \mu }\right) \int_{0}^{T}\int_{\mathbb{R}%
^{2}}\left( \frac{1}{\chi _{1}}u_{1}(x,t)+\frac{1}{\chi _{2}}%
u_{2}(x,t)\right) \log \left( \frac{1}{\chi _{1}}u_{1}(x,t)+\frac{1}{\chi
_{2}}u_{2}(x,t)\right) dxdt\leq\overline{C}_{T}.
\end{equation*}
where $\overline{C}_{T}$ is a constant depending on $T$ and $M=\theta
_{1}+\theta _{2}.$
\end{itemize}
\end{theorem}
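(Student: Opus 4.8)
The plan is to combine the free-energy estimate~\eqref{2} with the logarithmic Hardy-Littlewood-Sobolev (HLS) inequality, and then to pass, by elementary comparisons that exploit $\chi_1\le\chi_2$, from the three weighted $L\log L$ quantities appearing in $E$ to the single effective density $w:=\frac1{\chi_1}u_1+\frac1{\chi_2}u_2$.

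First, since by~\eqref{2} the map $t\mapsto E(t)$ is non-increasing, one has $E(t)\le E(0)$ for every $t\in[0,T)$, and the explicit kernel in~\eqref{s} lets one rewrite
\[
E(t)=\frac{\mu}{\chi_1}\int_{\mathbb{R}^{2}}u_1\log u_1+\frac1{\chi_2}\int_{\mathbb{R}^{2}}u_2\log u_2+\frac1{4\pi}\iint_{\mathbb{R}^{2}\times\mathbb{R}^{2}}(u_1+u_2)(x)(u_1+u_2)(y)\log|x-y|\,dx\,dy .
\]
Next I would apply the logarithmic HLS inequality \cite{BDP} to $f=u_1+u_2$, whose mass $M=\theta_1+\theta_2$ is conserved: the standing hypotheses on the second moments make $\iint(u_1+u_2)(u_1+u_2)\log^{+}|x-y|<\infty$, so the inequality applies and bounds the double integral below by $-\tfrac M2\int(u_1+u_2)\log(u_1+u_2)$ minus a constant depending only on $M$. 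Combined with $E(t)\le E(0)$ this yields, for all $t\in[0,T)$,
\[
\frac{\mu}{\chi_1}\int u_1\log u_1+\frac1{\chi_2}\int u_2\log u_2-\frac{M}{8\pi}\int(u_1+u_2)\log(u_1+u_2)\ \le\ E(0)+C(M).
\]

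The heart of the argument is to bound the left side of the last inequality below by $\bigl(1-\tfrac{M\chi_2}{8\pi}\bigr)\int w\log w$ minus a constant $C_T$ that is finite on $[0,T]$. Put $a=u_1/\chi_1$, $b=u_2/\chi_2$, so $w=a+b$ and, because $\chi_1\le\chi_2$, $\chi_1w\le u_1+u_2\le\chi_2w$. I would use: (i) the subadditivity-defect bound $\int a\log a+\int b\log b\ge\int w\log w-\bigl(\tfrac{\theta_1}{\chi_1}+\tfrac{\theta_2}{\chi_2}\bigr)$, which follows from $(a+b)\log(a+b)-a\log a-b\log b=a\log(1+\tfrac ba)+b\log(1+\tfrac ab)$ and $\log(1+s)\le s$; (ii) the bound $\int(u_1+u_2)\log(u_1+u_2)\le\chi_2\int w\log w+(\chi_2-\chi_1)\bigl(-\!\int_{\{w<1\}}w\log w\bigr)+M\log\chi_2$, obtained from $s\log s\le s\log r$ for $0\le s\le r$ (with $s=u_1+u_2\le r=\chi_2w$) after splitting $\mathbb{R}^{2}$ into $\{w\ge1\}$ and $\{w<1\}$; and (iii) the bound $-\!\int_{\{g<1\}}g\log g\le C_T$ for $g\in\{a,b,w\}$, which follows from $-s\log s\le\frac2e\sqrt s$ together with the standing local-in-time control of $\int|x|^{2}u_i$ (equivalently, the Gaussian/differential-entropy bound). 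If $\mu>1$ then, since $\int u_1\log u_1\ge-C_T$, one has $\tfrac{\mu}{\chi_1}\int u_1\log u_1\ge\tfrac1{\chi_1}\int u_1\log u_1-(\mu-1)\tfrac{C_T}{\chi_1}$, which lets one replace the coefficient $\mu/\chi_1$ by $1/\chi_1$; if $\mu\le1$ one instead divides the displayed inequality by $\mu$ and uses $1/\mu\ge1$ on the $u_2$-entropy, which is precisely what turns the denominator $8\pi$ into $8\pi\mu$. Assembling (i)--(iii) gives $\bigl(1-\tfrac{M\chi_2}{8\pi}\bigr)\int w\log w\le E(0)+C_T$ (respectively with $8\pi\mu$) for every $t\in[0,T)$, and integrating over $t\in(0,T)$ yields the stated space-time estimate.

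The main obstacle is steps (ii)--(iii): transferring the three weighted $L\log L$ norms onto the single density $w$ without spoiling the sharp constant $\tfrac{M\chi_2}{8\pi}$ (resp. $\tfrac{M\chi_2}{8\pi\mu}$). Because $s\mapsto s\log s$ is neither monotone nor subadditive, one must decompose $\mathbb{R}^{2}$ according to whether the densities are below or above $1$ and control the small-density region; it is precisely this region that forces the right-hand constant to depend on $T$ -- through the second moments of $u_1,u_2$ -- rather than being universal, which is why the conclusion is stated with $\int_0^{T}\!\!\int_{\mathbb{R}^{2}}$ and $C_T$. The assumption $\chi_1\le\chi_2$ is used exactly to make the sandwich $\chi_1w\le u_1+u_2\le\chi_2w$, hence inequalities (i)--(ii), point in the directions needed.
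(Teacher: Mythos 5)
Your proof is correct and follows precisely the route the paper indicates for this result (which it does not reprove here but imports from \cite{CEV}): the monotonicity $E(t)\le E(0)$ coming from \eqref{2}, the logarithmic Hardy--Littlewood--Sobolev inequality applied to $u_{1}+u_{2}$ with conserved mass $M$, and the sandwich $\chi_{1}w\le u_{1}+u_{2}\le \chi_{2}w$ (valid exactly because $\chi_{1}\le\chi_{2}$) to transfer everything onto the combined density $w=\tfrac{1}{\chi_{1}}u_{1}+\tfrac{1}{\chi_{2}}u_{2}$. Your bookkeeping in steps (i)--(iii) --- the subadditivity defect, the splitting over $\{w<1\}$, and the control of the negative part of the entropy by the second moments (which grow at most linearly by the moment identity, hence the $T$-dependence of the constants) --- is the standard way these constants, including the distinction between the cases $\mu>1$ and $\mu\le 1$, are obtained, and it checks out.
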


Theorem \ref{1} gives bounds for the entropy which is the key tool for the
proof of global existence for system (\ref{s}). In order to improve this
last result it would be desirable to use the HLS inequality for systems
developed by I. Shafrir and G. Wolansky in \cite{SW}. However, as we will
show in section 2, a direct application of this tool to our system do not
give the optimal result that we are looking for. We will show how an
adequate introduction of some auxiliary parameters in (\ref{2}) allows us to
improve the result of global existence obtained in \cite{CEV}, mainly, we
will show that if $\theta _{1},\theta _{2}$ satisfy%
\begin{equation*}
\frac{4\pi \mu \theta _{1}}{\chi _{1}}+\frac{4\pi \theta _{2}}{\chi _{2}}-%
\frac{1}{2}(\theta _{1}+\theta _{2})^{2}\geq 0,\text{ \ }\theta _{1}<\mu 
\frac{8\pi }{\chi _{1}}\text{ },\text{ }\theta _{2}<\frac{8\pi }{\chi _{2}}
\end{equation*}%
then global solutions in time exist.\textbf{\ }No kind of radial symmetry is
assumed.\textbf{\bigskip }

The most fundamental tool used through this paper is the logarithmic
Hardy-Littlewood-Sobolev's inequality for systems, which we proceed to
recall now. Following the notation from \cite{SW} we define the space 
\begin{equation*}
\Gamma _{M}\left( 
\mathbb{R}
^{2}\right) =\left\{ \widetilde{\rho }=\left( \widetilde{\rho _{i}}\right)
_{i\in I}:\widetilde{\rho _{i}}\geq 0,\int\limits_{%
\mathbb{R}
^{2}}\widetilde{\rho _{i}}\left\vert \log \widetilde{\rho _{i}}\right\vert
dx<\infty ,\int\limits_{%
\mathbb{R}
^{2}}\widetilde{\rho _{i}}=M_{i},\int\limits_{%
\mathbb{R}
^{2}}\widetilde{\rho _{i}}\log \left( 1+\left\vert x\right\vert ^{2}\right)
<\infty ,\forall i\in I\right\}
\end{equation*}%
where $M=\left( M_{i}\right) _{i\in I}$ is given. Next we define the
functional $F:\Gamma _{M}\left( 
\mathbb{R}
^{2}\right) \rightarrow R$ by, 
\begin{equation*}
F\left[ \widetilde{\rho }\right] =\sum\limits_{i\in I}\int\limits_{%
\mathbb{R}
^{2}}\widetilde{\rho _{i}}\log \widetilde{\rho _{i}}dx+\frac{1}{4\pi }%
\sum\limits_{j,i\in I}a_{i,j}\int\limits_{%
\mathbb{R}
^{2}}\int\limits_{%
\mathbb{R}
^{2}}\widetilde{\rho _{i}}\left( x\right) \log \left\vert x-y\right\vert 
\widetilde{\rho }_{j}\left( y\right) dxdy.
\end{equation*}%
and the polynomial,%
\begin{equation*}
\Lambda _{J}\left( M\right) =8\pi \sum_{i\in J}M_{i}-\sum_{i,j\in
J}a_{ij}M_{i}M_{j},\text{ \ \ \ \ \ }\forall \varnothing \not=J\subseteq I
\end{equation*}%
Then we have,

\begin{theorem}
{\textbf{Hardy-Litlewood-Sobolev's inequality for systems}}\newline
\label{HLS} Let $A=\left( a_{ij}\right) $ a symmetric matrix such that $%
a_{ij}\geq 0$ for all $i,j\in I$ and $M\in 
\mathbb{R}
_{+}^{n}$. Then:\newline
$\Lambda _{I}\left( M\right) =0$ and 
\begin{eqnarray*}
\Lambda _{J}\left( M\right) &\geq &0,\text{ \ for all }J\subseteq I \\
if\text{ }\Lambda _{J}\left( M\right) &=&0\text{ for some J, then }%
a_{ii}+\Lambda _{J\backslash \{i\}}\left( M\right) >0,\text{ \ }\forall i\in
J.
\end{eqnarray*}%
are necessary and sufficient conditions for the boundedness from below of $F$
on $\Gamma _{M}\left( 
\mathbb{R}
^{2}\right) $.\newline
There exist a minimizer $\rho $ of F over $\Gamma _{M}\left( 
\mathbb{R}
^{2}\right) $ if and only if%
\begin{equation*}
\Lambda _{I}\left( M\right) =0,\text{ \ \ and \ }\Lambda _{J}\left( M\right)
>0,\text{ \ \ \ \ for all }J\nsubseteq I
\end{equation*}
\end{theorem}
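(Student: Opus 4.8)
\noindent\emph{Proof proposal.} The plan is to prove both assertions --- boundedness from below of $F$ on $\Gamma_M(\mathbb{R}^{2})$, and existence of a minimizer --- by reducing to a one-dimensional variational problem. There are three ingredients: (a) a simultaneous symmetric decreasing rearrangement of the $\widetilde\rho_i$, which does not increase $F$ and keeps us in $\Gamma_M(\mathbb{R}^{2})$, so that we may assume the $\widetilde\rho_i$ radial and nonincreasing; (b) a change of variables to the cumulative masses $m_i(r)=\int_{B_r(0)}\widetilde\rho_i$, which rewrites $F$ on radial densities as an explicit one-dimensional functional; (c) a concentration--compactness analysis whose only mechanisms for loss of compactness are an overall dilation, controlled by $\Lambda_I(M)$, and the collapse of a sub-collection $J\subseteq I$ at a point, controlled by $\Lambda_J(M)$. (An alternative is to pass through the dual Moser--Trudinger inequality for systems, but the rearrangement route seems the most transparent.)

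Necessity of the conditions is obtained by feeding competitors into $F$. The mass-preserving dilations $\widetilde\rho_i^{\,\lambda}(x)=\lambda^{2}\widetilde\rho_i(\lambda x)$ give $F[\widetilde\rho^{\,\lambda}]=F[\widetilde\rho]+\tfrac1{4\pi}\Lambda_I(M)\log\lambda$, which is unbounded below as $\lambda\to 0$ or $\lambda\to\infty$ unless $\Lambda_I(M)=0$. Freezing the densities outside a set $J$ and letting those in $J$ concentrate at a common point at scale $\varepsilon$ gives $F=\tfrac1{4\pi}\Lambda_J(M)\log(1/\varepsilon)+O(1)$, so $\Lambda_J(M)\ge 0$. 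In the borderline case $\Lambda_J(M)=0$ one must look at the $O(1)$ term: using a two-scale competitor in which a single species $i\in J$ concentrates or de-concentrates at an independent second scale inside the collapsing $J$-bump forces $a_{ii}+\Lambda_{J\setminus\{i\}}(M)\ge 0$, and equality there would again drive $F$ to $-\infty$ (and, in particular, preclude a minimizer), giving the strict inequality. The same two-scale family shows that whenever $\Lambda_J(M)=0$ for a proper $J$ the infimum of $F$ is not attained.

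For sufficiency I would carry out steps (a)--(b) and then the direct method. In (a), replacing each $\widetilde\rho_i$ by $\widetilde\rho_i^{\,*}$ leaves $\sum_i\int\widetilde\rho_i\log\widetilde\rho_i$ unchanged, does not increase $\int\widetilde\rho_i\log(1+|x|^2)$ (Hardy--Littlewood, $-\log(1+|x|^2)$ being symmetric decreasing), and does not increase any $\int\!\!\int\widetilde\rho_i\log|x-y|\,\widetilde\rho_j$ (Riesz rearrangement applied to the decreasing truncations $\big(C-\log|x-y|\big)^{+}$ of the kernel, then limit using the moment integrability in $\Gamma_M(\mathbb{R}^{2})$); since $a_{ij}\ge 0$, $F[\widetilde\rho^{\,*}]\le F[\widetilde\rho]$. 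In (b), for radial densities Newton's theorem for the logarithmic potential gives $\int\!\!\int\widetilde\rho_i\log|x-y|\,\widetilde\rho_j=\int_0^\infty\!\int_0^\infty\log\max(r,s)\,dm_i(r)\,dm_j(s)$, so with $\widetilde\rho_i(r)=\tfrac1{2\pi r}m_i'(r)$,
\begin{equation*}
\mathcal F[m]=\sum_{i\in I}\int_0^\infty\log\!\Big(\tfrac{m_i'(r)}{2\pi r}\Big)\,dm_i(r)+\frac1{4\pi}\sum_{i,j\in I}a_{ij}\int_0^\infty\!\int_0^\infty\log\max(r,s)\,dm_i(r)\,dm_j(s),
\end{equation*}
to be minimized over nondecreasing $m_i:[0,\infty)\to[0,M_i]$ with $m_i(0)=0$, $m_i(\infty)=M_i$. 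Now I would normalize $\int\big(\sum_i\widetilde\rho_i\big)|x|^2\,dx=1$ to kill the dilation invariance (which exists exactly because $\Lambda_I(M)=0$); after the rearrangement a minimizing sequence is centered, so the only obstruction to precompactness is the collapse of a proper sub-collection $J$ at the origin, which by the concentration expansion costs $\tfrac1{4\pi}\Lambda_J(M)\log(1/\varepsilon)\to+\infty$ and is thus excluded once $\Lambda_J(M)>0$ for all proper $J$ (mass cannot instead spread out, by the moment normalization). The sequence then converges, the entropy being lower semicontinuous and the interaction continuous along the limit, to a minimizer. If instead $\Lambda_J(M)=0$ for some proper $J$, no minimizer exists (by the two-scale family above), but $F$ is still bounded below: the same estimates go through once $\Lambda_J(M)\ge 0$ is supplemented by the refined condition $a_{ii}+\Lambda_{J\setminus\{i\}}(M)>0$, which now controls the sub-leading term in the collapse expansion.

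The step I expect to be the main obstacle is exactly this degenerate case: when $\Lambda_J(M)=0$ for a proper $J$ the leading order of the concentration expansion cancels and the lower bound must be squeezed out of the sub-leading terms, which is precisely what $a_{ii}+\Lambda_{J\setminus\{i\}}(M)>0$ is meant to control; it is here that the one-dimensional reduction pays off, converting a second-order blow-up analysis into a manageable computation with the profiles $m_i$. The only other delicate point is making step (a) rigorous for the sign-changing, unbounded kernel $\log|x-y|$, which is handled by the layer-cake representation of its decreasing truncations together with the integrability that is part of the definition of $\Gamma_M(\mathbb{R}^{2})$.
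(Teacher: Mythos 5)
First, a point of comparison: the paper does not prove this theorem at all --- its ``proof'' is the single line ``See \cite{SW}, Th.~4'', so the result is imported verbatim from Shafrir and Wolansky. Your sketch is therefore being measured against the literature rather than against anything in the paper. The parts of your plan that are carried out are correct: the dilation identity $F[\widetilde\rho^{\,\lambda}]=F[\widetilde\rho]+\tfrac1{4\pi}\Lambda_I(M)\log\lambda$ and the single-scale collapse expansion $F=\tfrac1{4\pi}\Lambda_J(M)\log(1/\varepsilon)+O(1)$ do establish necessity of $\Lambda_I(M)=0$ and $\Lambda_J(M)\ge 0$; and the rearrangement step is legitimate because $a_{ij}\ge 0$ and Riesz applies to the truncations $\left(C-\log|z|\right)^{+}$, with the entropy unchanged by equimeasurability.

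The genuine gap is exactly where you flag it, and it is not a technicality but the entire content of the theorem: the borderline case $\Lambda_J(M)=0$ and the role of the condition $a_{ii}+\Lambda_{J\setminus\{i\}}(M)>0$. Your proposed two-scale competitor (collapse $J$ at scale $\varepsilon$, then species $i$ at a smaller scale $\eta$) does not produce that condition: carrying out the expansion gives $F=\tfrac1{4\pi}\Lambda_J(M)\log(1/\varepsilon)+\tfrac1{4\pi}\Lambda_{\{i\}}(M)\log(\varepsilon/\eta)+O(1)$, i.e.\ the second scale only regenerates $\Lambda_{\{i\}}(M)\ge 0$, which is already among the conditions $\Lambda_J(M)\ge 0$. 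Note that $a_{ii}+\Lambda_{J\setminus\{i\}}(M)$ adds a bare matrix entry to a quantity quadratic in the masses, so no competitor built purely from spatial rescalings can generate it at logarithmic order; it arises from the $O(1)$ (second-order) terms of the collapse expansion, which you have not computed. Consequently three of the four assertions in the degenerate regime --- necessity of the refined condition, boundedness from below of $F$ when some proper $\Lambda_J(M)$ vanishes, and non-attainment of the infimum in that case --- are asserted rather than derived. Since every other step of your outline is routine, what remains is a program, not a proof; Shafrir and Wolansky handle precisely this degeneracy by a different route (a dual Moser--Trudinger inequality for systems together with an induction on the set of components), which is a sign that the sub-leading analysis you defer is where the real work lives.
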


\begin{proof}
See \cite{SW}, Th. 4.
\end{proof}

\section{Global existence}

The first result of this section gives us bounds for the entropy
functionals. We achieve our aim through an appropriate use of the HLS
inequality for systems, Th. \ref{HLS}. The main idea of the proof \ read as
follows: \ Given that a direct application of the HLS inequality would
allows us to get bounds \textit{only on a curve} of the $\theta _{1}\theta
_{2}-$plane for the entropies $\int_{%
\mathbb{R}
^{2}}u_{i}(x,t)\log u_{i}(x,t)dx,$\ $i=1,2$, we introduce some parameters
before applying the HLS inequality. This step will allows us `to move', `to
shrink' and `to dilate' this curve in such a way the the full region (\ref%
{region}) is swept and therefore obtain estimation (\ref{entro}) in this
region.\bigskip

We suppose throught this paper that, 
\begin{equation}
\left. 
\begin{array}{c}
u_{10},u_{20}\in L^{1}(%
\mathbb{R}
^{2},(1+\left\vert x\right\vert ^{2})dx) \\ 
u_{10}\log u_{10}\text{, }u_{20}\log u_{20}\in L^{1}(%
\mathbb{R}
^{2},dx)%
\end{array}%
\right\}  \label{lll}
\end{equation}

\begin{lemma}[Lower bound for the entropy functionals]
\label{LowerBoundEntropy}Consider a non-negative weak solution of (\ref{s}),
such that $u_{i}(1+\left\vert x\right\vert ^{2}),$ $i=1,2$ are bounded in $%
L_{loc}^{\infty }(%
\mathbb{R}
^{+},L^{1}(%
\mathbb{R}
^{2})).$ Then we have,%
\begin{equation*}
\int\limits_{%
\mathbb{R}
^{2}}u_{i}\left( x,t\right) \log u_{i}\left( x,t\right) \geq M\log M-M\log %
\left[ \pi \left( 1+t\right) \right] -C,\text{ \ }i=1,2.
\end{equation*}
\end{lemma}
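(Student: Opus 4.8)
The plan is to derive the lower bound by comparing $u_i(\cdot,t)$ against a suitable reference density and exploiting the known moment control $u_i(1+|x|^2)\in L^\infty_{loc}(\mathbb{R}^+,L^1(\mathbb{R}^2))$. The standard tool here is the elementary inequality relating the negative part of the entropy of a probability-type density to its second moment. Concretely, for a fixed $i$ write $m_i(t)=\int_{\mathbb{R}^2}u_i(x,t)(1+|x|^2)\,dx$, which by hypothesis is bounded by some constant on each compact time interval; in fact on $[0,T]$ it is bounded by a constant $C_T$, but to get the clean $\log(1+t)$ form one keeps track of how this moment can be bounded linearly in $t$ (using that the mass is conserved and the moment grows at most linearly, as follows from testing the equation against $|x|^2$ and using $\nabla v\in L^\infty_{loc}$). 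This is the step I expect to be the main obstacle: justifying that $m_i(t)\le C(1+t)$ rather than merely $m_i(t)\le C_T$, since the precise $(1+t)$ dependence in the statement must come from a moment estimate, not just from the qualitative boundedness assumption.

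Granting $m_i(t)\le C(1+t)$, the core computation is as follows. Split $\int u_i\log u_i = \int_{\{u_i\le 1\}} u_i\log u_i + \int_{\{u_i>1\}} u_i\log u_i$; the second integral is nonnegative, so it suffices to bound the first from below. On $\{u_i\le 1\}$ write, with a parameter $a>0$ to be chosen (here $a=\pi(1+t)$ will do the job),
\begin{equation*}
u_i\log u_i = u_i\log\!\left(\frac{u_i}{e^{-|x|^2/a}}\right) - \frac{|x|^2}{a}u_i.
\end{equation*}
The first term, integrated over $\{u_i\le 1\}$, is $\ge -\,(\text{const})$ by the nonnegativity of relative entropy against the Gaussian-type weight: indeed $s\log(s/w)\ge s - w$ pointwise for $s\ge 0$, $w>0$, so $\int_{\{u_i\le1\}} u_i\log(u_i e^{|x|^2/a}) \ge -\int_{\mathbb{R}^2} e^{-|x|^2/a}\,dx = -\pi a$, while the second term contributes $-\frac{1}{a}\int u_i|x|^2 \ge -\frac{m_i(t)}{a}$.

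Collecting these bounds with $a=\pi(1+t)$ gives
\begin{equation*}
\int_{\mathbb{R}^2}u_i\log u_i \;\ge\; -\pi^2(1+t) - \frac{C(1+t)}{\pi(1+t)} \;=\; -\pi^2(1+t) - \frac{C}{\pi},
\end{equation*}
which already is a lower bound of the asserted shape, though to match the stated form $M\log M - M\log[\pi(1+t)] - C$ exactly one instead uses the sharper splitting $u_i\log u_i = u_i\log\big(u_i \cdot \frac{\pi(1+t)}{M} e^{|x|^2/(\pi(1+t))}\big) - u_i\log\frac{\pi(1+t)}{M} - \frac{|x|^2}{\pi(1+t)}u_i$ and applies $s\log(s/w)\ge s-w$ with $w = \frac{M}{\pi(1+t)}e^{-|x|^2/(\pi(1+t))}$, whose total mass is exactly $M$. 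Then $\int_{\mathbb{R}^2} u_i\log(u_i/w) \ge M - M = 0$ (after extending the integral over all of $\mathbb{R}^2$, using nonnegativity of the integrand where $u_i\le w$ fails to help — more carefully, $\int u_i\log(u_i/w)\ge \int(u_i-w)=M-M=0$ holds over the whole space by Jensen / the pointwise inequality), the term $-\int u_i\log\frac{\pi(1+t)}{M} = M\log M - M\log[\pi(1+t)]$ is exactly the main terms, and $-\frac{1}{\pi(1+t)}\int u_i|x|^2 \ge -\frac{m_i(t)}{\pi(1+t)}\ge -C$ by the moment bound. This yields the claim with $i=1,2$ treated identically. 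The only genuine subtlety, as noted, is securing the linear-in-$t$ moment bound $m_i(t)\le C(1+t)$ from the differential equation and the assumed regularity; everything else is the Carleman-type entropy estimate above.
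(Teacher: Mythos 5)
Your proposal is correct and follows essentially the same route as the paper's proof: a linear-in-$t$ second-moment bound (which the paper obtains from the exact identity $\frac{d}{dt}\int_{\mathbb{R}^{2}}\left( \frac{\mu }{\chi _{1}}u_{1}+\frac{1}{\chi _{2}}u_{2}\right) \left\vert x\right\vert ^{2}dx=\mathrm{const}$ quoted from \cite{CEV}, i.e.\ exactly your ``test against $|x|^2$'' step), followed by bounding the entropy from below by the relative entropy against a spreading Gaussian of variance of order $1+t$, the paper invoking Jensen's inequality where you use the equivalent pointwise inequality $s\log (s/w)\geq s-w$. The only blemish is a harmless normalization slip: with the exponent $e^{-|x|^{2}/(\pi (1+t))}$ your reference density $w$ has total mass $M\pi$ rather than $M$ (the paper uses $e^{-|x|^{2}/(1+t)}$ with prefactor $\frac{1}{\pi (1+t)}$), which only alters the additive constant $C$.
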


\begin{proof}
In the following $C$ will denote a generic constant. We have from \cite[%
Theorem 1]{CEV} that, 
\begin{equation}
\frac{d}{dt}\int\limits_{%
\mathbb{R}
^{2}}\left( \frac{\mu }{\chi _{1}}u_{1}(x,t)+\frac{1}{\chi _{2}}%
u_{2}(x,t)\right) \left\vert x\right\vert ^{2}dx=\frac{4\theta _{1}}{\chi
_{1}}\mu +\frac{4\theta _{2}}{\chi _{2}}-\frac{1}{2\pi }\left( \theta
_{1}+\theta _{2}\right) ^{2}.  \label{momento}
\end{equation}%
We define, 
\begin{equation*}
n:=\frac{\mu }{\chi _{1}}u_{1}+\frac{1}{\chi _{2}}u_{2};
\end{equation*}%
and%
\begin{equation*}
K:=\frac{4\theta _{1}}{\chi _{1}}\mu +\frac{4\theta _{2}}{\chi _{2}}-\frac{1%
}{2\pi }\left( \theta _{1}+\theta _{2}\right) ^{2}.
\end{equation*}%
Thus we obtain,%
\begin{equation}
\int\limits_{%
\mathbb{R}
^{2}}n(x,t)\left\vert x\right\vert ^{2}dx=Kt+\int\limits_{%
\mathbb{R}
^{2}}n(x,0)\left\vert x\right\vert ^{2}dx\leq C(1+t),  \label{b2}
\end{equation}%
where $C:=\max \left\{ K,\int\limits_{%
\mathbb{R}
^{2}}n(x,0)\left\vert x\right\vert ^{2}dx\right\} .$ From the inequality $%
u_{i}\leq Cn$, where $i=1,2$ and (\ref{b2}) we deduce that,%
\begin{equation*}
\int\limits_{%
\mathbb{R}
^{2}}u_{i}(x,t)\left\vert x\right\vert ^{2}dx\leq C(1+t),\text{ \ }i=1,2
\end{equation*}%
Using the same idea presented in \cite[Lemma 2.5]{BDP}, we observe that,%
\begin{equation}
\begin{array}{ccc}
\int\limits_{%
\mathbb{R}
^{2}}u_{i}\left( x,t\right) \log u_{i}\left( x,t\right) & \geq & \frac{1}{1+t%
}\int\limits_{%
\mathbb{R}
^{2}}u_{i}\left( x,t\right) \left\vert x\right\vert ^{2}-C+\int\limits_{%
\mathbb{R}
^{2}}u_{i}\left( x,t\right) \log u_{i}\left( x,t\right) \\ 
& = & \int\limits_{%
\mathbb{R}
^{2}}u_{i}\left( x,t\right) \log \left[ \frac{u_{i}\left( x,t\right) }{e^{-%
\frac{\left\vert x\right\vert ^{2}}{1+t}}}\right] -C.%
\end{array}
\label{ineq1}
\end{equation}%
Let us now define the variable $\mu $ as, 
\begin{equation*}
\mu \left( x,t\right) =\frac{1}{\pi \left( 1+t\right) }\exp \left( -\frac{%
\left\vert x\right\vert ^{2}}{1+t}\right) .
\end{equation*}%
We obtain then from (\ref{ineq1}) that, 
\begin{eqnarray}
\int\limits_{%
\mathbb{R}
^{2}}u_{i}\left( x,t\right) \log u_{i}\left( x,t\right) &\geq &\int\limits_{%
\mathbb{R}
^{2}}u_{i}\left( x,t\right) \log \left[ \frac{u_{i}\left( x,t\right) }{\mu
\left( x,t\right) }\right] dx-M\log \left[ \pi \left( 1+t\right) \right] -C 
\notag \\
&=&\int\limits_{%
\mathbb{R}
^{2}}\frac{u_{i}\left( x,t\right) }{\mu \left( x,t\right) }\log \left[ \frac{%
u_{i}\left( x,t\right) }{\mu \left( x,t\right) }\right] \mu \left(
x,t\right) dx-M\log \left[ \pi \left( 1+t\right) \right] -C;  \label{b1}
\end{eqnarray}%
where $M=\frac{\mu }{\chi _{1}}\theta _{1}+\frac{1}{\chi _{2}}\theta _{2}.$
Using Jensen's inequality we get from (\ref{b1}) that 
\begin{equation*}
\int\limits_{%
\mathbb{R}
^{2}}u_{i}\left( x,t\right) \log u_{i}\left( x,t\right) \geq M\log M-M\log %
\left[ \pi \left( 1+t\right) \right] -C.
\end{equation*}
\end{proof}

\begin{theorem}[Upper bound for the entropy functionals]
\label{th1} Consider a non-negative weak solution of (\ref{s}), such that $%
u_{i}\left( 1+\left\vert x\right\vert ^{2}\right) ,$ $u_{i}\log u_{i},$ $%
i=1,2$ are bounded in $L_{loc}^{\infty }(%
\mathbb{R}
^{+},L^{1}(%
\mathbb{R}
^{2}))$. If $(\theta _{1},\theta _{2})$ satisfies 
\begin{equation}
\theta _{1}<\frac{8\pi }{\chi _{1}}\mu ;\text{ \ \ }\theta _{2}<\frac{8\pi }{%
\chi _{2}};\text{ \ \ \ \ }8\pi \left( \frac{\theta _{1}}{\chi _{1}}\mu +%
\frac{\theta _{2}}{\chi _{2}}\right) -\left( \theta _{1}+\theta _{2}\right)
^{2}>0;  \label{region}
\end{equation}%
then we have, 
\begin{equation}
\int_{%
\mathbb{R}
^{2}}u_{i}(x,t)\log u_{i}(x,t)dx\leq C,\text{ \ \ \ \ \ \ }  \label{entro}
\end{equation}%
where $i=1,2$ and $C$ is a constant depending only on the parameters $\theta
_{1},\theta _{2},\mu ,\chi _{1}$, $\chi _{2},$ and $E(0)$
\end{theorem}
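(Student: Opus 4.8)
The plan is to exploit the free-energy dissipation inequality \eqref{2} together with the logarithmic Hardy--Littlewood--Sobolev inequality for systems (Theorem \ref{HLS}), but — as signalled in the introduction to this section — not applied directly to the pair $(u_1/\chi_1,u_2/\chi_2)$, since that only controls the entropy on a \emph{curve} of the $\theta_1\theta_2$-plane. Instead I would introduce auxiliary scaling parameters: replace $u_i$ by $\rho_i(x,t)=\lambda_i^2 u_i(\lambda_i x,t)$ for suitably chosen $\lambda_1,\lambda_2>0$ (or equivalently rescale the masses/coupling coefficients), apply Theorem \ref{HLS} to the rescaled densities, and then optimize over the parameters. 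The matrix $A=(a_{ij})$ entering the functional $F$ should be taken so that the cross term reproduces $-\tfrac12\int(u_1+u_2)v$ after weighting by $\mu/\chi_1$ and $1/\chi_2$; concretely one wants $a_{11}\propto \chi_1/\mu$, $a_{22}\propto\chi_2$, $a_{12}=a_{21}$ the geometric-type mean, so that the polynomials $\Lambda_{\{1\}},\Lambda_{\{2\}},\Lambda_{\{1,2\}}$ are (up to normalization) exactly $\tfrac{8\pi}{\chi_1}\mu-\theta_1$, $\tfrac{8\pi}{\chi_2}-\theta_2$, and $8\pi(\tfrac{\theta_1}{\chi_1}\mu+\tfrac{\theta_2}{\chi_2})-(\theta_1+\theta_2)^2$. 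Under hypothesis \eqref{region} all three are strictly positive, which is exactly the condition in Theorem \ref{HLS} guaranteeing $F$ is bounded below on $\Gamma_M(\mathbb{R}^2)$ with room to spare.

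The key steps, in order, are as follows. First, integrate \eqref{2} in time from $0$ to $t$ to get
\begin{equation*}
E(t)+\int_0^t\!\!\Big(\tfrac{1}{\chi_1}\!\int u_1|\mu\nabla\log u_1-\chi_1\nabla v|^2+\tfrac{1}{\chi_2}\!\int u_2|\nabla\log u_2-\chi_2\nabla v|^2\Big)\,ds=E(0),
\end{equation*}
so in particular $E(t)\le E(0)$. Second, rewrite $E(t)$ using the explicit Newtonian form of $v$: the interaction term is $-\tfrac{1}{2}\int(u_1+u_2)v=\tfrac{1}{4\pi}\iint (u_1+u_2)(x)\log|x-y|(u_1+u_2)(y)$, which is the $F$-type cross term for the total mass. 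Third — the heart of the argument — introduce the scaling parameters and apply Theorem \ref{HLS} to the scaled densities so as to obtain, from $E(t)\le E(0)$, a bound of the form
\begin{equation*}
\varepsilon\int_{\mathbb{R}^2}\Big(\tfrac{\mu}{\chi_1}u_1\log u_1+\tfrac{1}{\chi_2}u_2\log u_2\Big)dx \le E(0)+ (\text{terms involving }\int u_i|x|^2\text{ and }\int u_i)
\end{equation*}
for some $\varepsilon>0$ depending on the strict gaps in \eqref{region}; the $\log(1+|x|^2)$-moment condition in $\Gamma_M$ is supplied by the standing hypothesis $u_i(1+|x|^2)\in L^\infty_{loc}(\mathbb{R}^+,L^1)$, and \eqref{momento} controls the growth of $\int u_i|x|^2$ linearly in $t$. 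Fourth, combine this upper bound on the weighted sum of entropies with the \emph{lower} bound on each individual entropy from Lemma \ref{LowerBoundEntropy}: since $\int u_1\log u_1$ and $\int u_2\log u_2$ are each bounded below (by $M\log M-M\log[\pi(1+t)]-C$), an upper bound on a positive combination of the two forces an upper bound on each one separately, yielding \eqref{entro}. One must check the constant $C$ depends only on $\theta_1,\theta_2,\mu,\chi_1,\chi_2,E(0)$; the $t$-dependence coming through $\int u_i|x|^2$ and through $\log(1+t)$ should cancel — this is where the specific choice $K=$ the left side of \eqref{region} being what appears in \eqref{momento} is used, so that the same combination that is $\ge0$ also neutralizes the moment growth.

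The main obstacle I anticipate is step three: choosing the scaling parameters and the matrix $A$ so that (a) the weighted free energy $E(t)$ is genuinely bounded below by $\varepsilon\cdot(\text{entropy sum})$ minus controllable moment terms, and (b) the strict inequalities in \eqref{region} — rather than just the non-strict ones — are what produce the positive $\varepsilon$. A direct application of Theorem \ref{HLS} to $(u_1/\chi_1,u_2/\chi_2)$ gives only the boundary case $\Lambda_I(M)=0$, i.e. control on the curve $8\pi(\tfrac{\theta_1}{\chi_1}\mu+\tfrac{\theta_2}{\chi_2})=(\theta_1+\theta_2)^2$; the role of the auxiliary parameters is precisely to deform this so that the whole open region \eqref{region} is swept out, and getting the bookkeeping of the parameter optimization to close uniformly in $t$ (and to keep $C$ independent of $t$) is the delicate point. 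A secondary technical issue is the regularization/approximation needed to make the formal manipulations leading to \eqref{2} rigorous for weak solutions, but since we are allowed to assume the a priori bounds stated before \eqref{2}, this should be routine.
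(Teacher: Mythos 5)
Your overall architecture coincides with the paper's: start from $E(t)\le E(0)$, expand the interaction term via the Newtonian form of $v$, apply the logarithmic HLS inequality for systems with auxiliary parameters, and then use the lower bound of Lemma \ref{LowerBoundEntropy} to convert an upper bound on a positive combination of the two entropies into an upper bound on each one separately. That last step you have exactly right, and it is the paper's closing move.

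There is, however, a genuine gap in your step three, precisely at the crux. First, Theorem \ref{HLS} requires $\Lambda_I(M)=0$ \emph{exactly} as a necessary condition for $F$ to be bounded below; your claim that strict positivity of all three polynomials $\Lambda_{\{1\}},\Lambda_{\{2\}},\Lambda_{\{1,2\}}$ ``is exactly the condition guaranteeing $F$ is bounded below'' is false --- if $\Lambda_I(M)>0$ then $F[\rho_\lambda]=F[\rho]+\tfrac{1}{4\pi}\Lambda_I(M)\log\lambda\to-\infty$ as $\lambda\to0^+$ under dilation, which is exactly why a direct application only works on a curve and why auxiliary parameters are needed at all. Second, the parametrization you put forward as primary, $\rho_i(x,t)=\lambda_i^2u_i(\lambda_ix,t)$, preserves each mass $M_i$ and hence leaves every $\Lambda_J(M)$ unchanged, so it cannot deform the constraint curve; moreover with $\lambda_1\neq\lambda_2$ the cross term $\iint\rho_1(x)\log|x-y|\rho_2(y)\,dx\,dy$ does not reduce to the original one plus a computable constant. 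What the paper actually does is an \emph{amplitude} rescaling: it applies Theorem \ref{HLS} to the pair $\left(\mu u_1/a,\;u_2/b\right)$ with the compensating matrix $a_{11}=a^2/\mu^2$, $a_{12}=a_{21}=ab/\mu$, $a_{22}=b^2$, so that the admissibility conditions become $\theta_1\le 8\pi\mu/a$, $\theta_2\le 8\pi/b$ together with the \emph{equality} $8\pi\left(\mu\theta_1/a+\theta_2/b\right)=(\theta_1+\theta_2)^2$ as in \eqref{H}; taking $a>\chi_1$ and $b>\chi_2$ leaves strictly positive coefficients $\mu(1/\chi_1-1/a)$ and $1/\chi_2-1/b$ in front of the entropies in \eqref{te}, and as $(a,b)$ ranges over $(\chi_1,\infty)\times(\chi_2,\infty)$ the family of equality curves sweeps out the whole open region \eqref{region}. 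You gesture at this in the parenthetical ``or equivalently rescale the masses'' and again in your final paragraph, but as stated your mechanism would not close. A minor further point: the right-hand side of \eqref{te} contains no second-moment term, so no cancellation of moment growth of the kind you anticipate is needed at that stage; the only $t$-dependence enters through the $\log(1+t)$ of Lemma \ref{LowerBoundEntropy}, and your scepticism about the claimed $t$-independence of the final constant is in fact a fair criticism of the statement as written.
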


\begin{proof}
From (\ref{s}) we have that, 
\begin{equation*}
E\left( t\right) \leq E\left( 0\right) ,\forall t>0;
\end{equation*}%
in consequence we have the following estimate, 
\begin{eqnarray*}
&&\frac{\mu }{\chi _{1}}\int\limits_{%
\mathbb{R}
^{2}}u_{1}\left( x,t\right) \log u_{1}\left( x,t\right) dx+\frac{1}{\chi _{2}%
}\int\limits_{%
\mathbb{R}
^{2}}u_{2}\left( x,t\right) \log u_{2}\left( x,t\right) dx \\
&\leq &E\left( 0\right) -\frac{1}{4\pi }\int\limits_{%
\mathbb{R}
^{2}}\int\limits_{%
\mathbb{R}
^{2}}u_{1}\left( x,t\right) u_{1}\left( y,t\right) \log \left\vert
x-y\right\vert dxdy-\frac{1}{4\pi }\int\limits_{%
\mathbb{R}
^{2}}\int\limits_{%
\mathbb{R}
^{2}}u_{1}\left( x,t\right) u_{2}\left( y,t\right) \log \left\vert
x-y\right\vert dxdy \\
&&-\frac{1}{4\pi }\int\limits_{%
\mathbb{R}
^{2}}\int\limits_{%
\mathbb{R}
^{2}}u_{2}\left( x,t\right) u_{1}\left( y,t\right) \log \left\vert
x-y\right\vert dxdy-\frac{1}{4\pi }\int\limits_{%
\mathbb{R}
^{2}}\int\limits_{%
\mathbb{R}
^{2}}u_{2}\left( x,t\right) u_{2}\left( y,t\right) \log \left\vert
x-y\right\vert dxdy.
\end{eqnarray*}%
We introduce positive parameters $a$ and $b$ in the last identity such that 
\begin{equation}
a>\chi _{1},\text{ \ \ }b>\chi _{2}  \label{hyp}
\end{equation}%
in the following way,%
\begin{eqnarray}
&&\frac{\mu }{\chi _{1}}\int\limits_{%
\mathbb{R}
^{2}}u_{1}\left( x,t\right) \log u_{1}\left( x,t\right) dx+\frac{1}{\chi _{2}%
}\int\limits_{%
\mathbb{R}
^{2}}u_{2}\left( x,t\right) \log u_{2}\left( x,t\right) dx  \notag \\
&\leq &E\left( 0\right) -\frac{a^{2}}{\mu ^{2}4\pi }\int\limits_{%
\mathbb{R}
^{2}}\int\limits_{%
\mathbb{R}
^{2}}\frac{\mu u_{1}\left( x,t\right) }{a}\frac{\mu u_{1}\left( y,t\right) }{%
a}\log \left\vert x-y\right\vert dxdy  \notag \\
&&-\frac{ab}{\mu 4\pi }\int\limits_{%
\mathbb{R}
^{2}}\int\limits_{%
\mathbb{R}
^{2}}\frac{\mu u_{1}\left( x,t\right) }{a}\frac{u_{2}\left( y,t\right) }{b}%
\log \left\vert x-y\right\vert dxdy  \notag \\
&&-\frac{ab}{\mu 4\pi }\int\limits_{%
\mathbb{R}
^{2}}\int\limits_{%
\mathbb{R}
^{2}}\frac{u_{2}\left( x,t\right) }{b}\frac{\mu u_{1}\left( y,t\right) }{a}%
\log \left\vert x-y\right\vert dxdy  \notag \\
&&-\frac{b^{2}}{4\pi }\int\limits_{%
\mathbb{R}
^{2}}\int\limits_{%
\mathbb{R}
^{2}}\frac{u_{2}\left( x,t\right) }{b}\frac{u_{2}\left( y,t\right) }{b}\log
\left\vert x-y\right\vert dxdy;  \label{param}
\end{eqnarray}%
By doing so, we can apply now the HLS inequality for systems (Th.\ref{HLS})
to the functions $\mu u_{1}/a$ and $u_{2}/b$ on identity (\ref{param})
getting that, 
\begin{eqnarray*}
&&\frac{\mu }{\chi _{1}}\int\limits_{%
\mathbb{R}
^{2}}u_{1}\left( x,t\right) \log u_{1}\left( x,t\right) +\frac{1}{\chi _{2}}%
\int\limits_{%
\mathbb{R}
^{2}}u_{2}\left( x,t\right) \log u_{2}\left( x,t\right) \\
&\leq &E\left( 0\right) -C+\int\limits_{%
\mathbb{R}
^{2}}\mu \frac{u_{1}\left( x,t\right) }{a}\log \left( \mu \frac{u_{1}\left(
x,t\right) }{a}\right) dx+\int\limits_{%
\mathbb{R}
^{2}}\frac{u_{2}\left( x,t\right) }{b}\log \left( \frac{u_{2}\left(
x,t\right) }{b}\right) dx
\end{eqnarray*}%
where the conditions for the existence of the constant $C$ given by Th. (\ref%
{HLS}) are,%
\begin{eqnarray*}
\Lambda _{\left\{ 1\right\} }\left( M\right) &=&8\pi \mu \frac{\theta _{1}}{a%
}-a^{2}\left( \frac{\theta _{1}}{a}\right) ^{2}\geq 0; \\
\Lambda _{\left\{ 2\right\} }\left( M\right) &=&8\pi \frac{\theta _{2}}{b}%
-b^{2}\left( \frac{\theta _{2}}{b}\right) ^{2}\geq 0; \\
\Lambda _{\left\{ 1,2\right\} }\left( M\right) &=&8\pi \left( \mu \frac{%
\theta _{1}}{a}+\frac{\theta _{2}}{b}\right) -(a^{2}\frac{\theta _{1}}{a}%
\frac{\theta _{1}}{a}+ab\frac{\theta _{1}}{a}\frac{\theta _{2}}{b}+b^{2}%
\frac{\theta _{2}}{b}\frac{\theta _{2}}{b})=0
\end{eqnarray*}%
equivalently,%
\begin{equation}
\left. 
\begin{tabular}{l}
$\theta _{1}\leq \mu \frac{8\pi }{a},\text{ \ \ \ }\theta _{2}\leq \frac{%
8\pi }{b},$ \\ 
$8\pi \left( \mu \frac{\theta _{1}}{a}+\frac{\theta _{2}}{b}\right) -\left(
\theta _{1}+\theta _{2}\right) ^{2}=0,$%
\end{tabular}%
\right\}  \label{H}
\end{equation}%
In conclusion we have proved that condition (\ref{H}) implies,%
\begin{eqnarray}
&&\mu \left( \frac{1}{\chi _{1}}-\frac{1}{a}\right) \int\limits_{%
\mathbb{R}
^{2}}u_{1}\left( x,t\right) \log u_{1}\left( x,t\right) +\left( \frac{1}{%
\chi _{2}}-\frac{1}{b}\right) \int\limits_{%
\mathbb{R}
^{2}}u_{2}\left( x,t\right) \log u_{2}\left( x,t\right)  \notag \\
&\leq &E\left( 0\right) -C+\frac{\theta _{1}\mu }{a}\log \frac{\mu }{a}+%
\frac{\theta _{2}}{b}\log \frac{1}{b}.  \label{te}
\end{eqnarray}%
We have from Lemma \ref{LowerBoundEntropy} that the functionals $\int
u_{i}\log u_{i}dx$ are lower bounded$,$ for $i=1,2.$ On the other side each
of the coefficients of the entropy functionals in (\ref{te}) are positive as
long as $a>\chi _{1}$\ and $b>\chi _{2}.$\ Then we take parameters $a$\ and $%
b$ on the intervals $(\chi _{1}$,$\infty )$\ and $(\chi _{2},\infty )$\
respectively \ We conclude that estimates (\ref{entro}) on region\textbf{\ }(%
\ref{region}) holds.\bigskip
\end{proof}

Boundedness of the entropies in the last Theorem is the main tool that we
will use\ to obtain the following result of global existence.

\begin{theorem}[Global Existence of Weak Solutions]
\label{global} Under assumption (\ref{lll}) and 
\begin{equation}
8\pi \left( \frac{\theta _{1}}{\chi _{1}}\mu +\frac{\theta _{2}}{\chi _{2}}%
\right) -\left( \theta _{1}+\theta _{2}\right) ^{2}>0;  \label{crutial2}
\end{equation}%
\begin{equation}
\theta _{1}<\frac{8\pi }{\chi _{1}}\mu ;\text{ \ \ \ }\theta _{2}<\frac{8\pi 
}{\chi _{2}};  \label{berra}
\end{equation}%
system (\ref{s}) has a global weak nonnegative solution such that 
\begin{equation*}
(1+\left\vert x\right\vert ^{2}+\left\vert \log u_{i}\right\vert )u_{i}\in
L^{\infty }(0,T;L^{1}(%
\mathbb{R}
^{2}))
\end{equation*}%
and%
\begin{equation*}
-\frac{1}{\chi _{1}}\int \int_{\left[ 0,T\right] \times 
\mathbb{R}
^{2}}u_{1}\left\vert \mu \nabla \log u_{1}-\nabla \chi _{1}v\right\vert
^{2}dx-\frac{1}{\chi _{2}}\int \int_{\left[ 0,T\right] \times 
\mathbb{R}
^{2}}u_{2}\left\vert \nabla \log u_{2}-\nabla \chi _{2}v\right\vert
^{2}dx<\infty ,
\end{equation*}
\end{theorem}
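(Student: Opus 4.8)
The plan is to obtain the global weak solution as a limit of approximate (regularized) solutions, using the entropy bound of Theorem \ref{th1} as the source of the compactness needed to pass to the limit. First I would set up a regularized problem: replace the singular Poisson kernel by a mollified kernel (or equivalently add a small viscosity/cutoff), so that for each $\varepsilon>0$ one has classical global solutions $(u_1^\varepsilon,u_2^\varepsilon,v^\varepsilon)$ with the same conserved masses $\theta_1,\theta_2$. Along this approximation the free-energy dissipation identity (\ref{2}) holds (or holds up to vanishing error), so $E^\varepsilon(t)\le E^\varepsilon(0)\le E(0)+o(1)$, and the second-moment identity (\ref{momento}) persists. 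Under hypotheses (\ref{crutial2})--(\ref{berra}) the parameters $a\in(\chi_1,\infty)$, $b\in(\chi_2,\infty)$ furnished by Theorem \ref{th1} still satisfy (\ref{H}) for the perturbed masses (by continuity, for $\varepsilon$ small), so the argument of Theorem \ref{th1} combined with Lemma \ref{LowerBoundEntropy} yields the uniform-in-$\varepsilon$, locally-uniform-in-$t$ bound
\begin{equation*}
\int_{\mathbb{R}^2} u_i^\varepsilon(x,t)\log u_i^\varepsilon(x,t)\,dx\le C,\qquad \int_{\mathbb{R}^2}(1+|x|^2)\,u_i^\varepsilon(x,t)\,dx\le C(1+T),\qquad i=1,2,
\end{equation*}
together with the dissipation bound $\int_0^T\!\!\int u_i^\varepsilon|\nabla\log u_i^\varepsilon-\chi_i\nabla v^\varepsilon|^2\le C$.

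Next I would extract compactness. The entropy and second-moment bounds give, via the Dunford--Pettis theorem, weak-$L^1$ compactness of $u_i^\varepsilon$ in $L^\infty(0,T;L^1(\mathbb{R}^2))$ with no mass escaping to infinity (the $|x|^2$-moment controls the tails). From the dissipation term one writes $\nabla u_i^\varepsilon = 2\sqrt{u_i^\varepsilon}\,\nabla\sqrt{u_i^\varepsilon}$ and $\nabla\sqrt{u_i^\varepsilon} = \tfrac12\sqrt{u_i^\varepsilon}\big(\nabla\log u_i^\varepsilon-\chi_i\nabla v^\varepsilon\big)+\tfrac{\chi_i}{2}\sqrt{u_i^\varepsilon}\,\nabla v^\varepsilon$; the entropy bound plus standard logarithmic HLS estimates control $\nabla v^\varepsilon$ in $L^2_{loc}$, so $\sqrt{u_i^\varepsilon}$ is bounded in $L^2_{loc}(0,T;H^1)$ and $u_i^\varepsilon$ in $L^1_{loc}(0,T;W^{1,1})$; this plus the equation giving a bound on $\partial_t u_i^\varepsilon$ in a negative Sobolev space lets one apply an Aubin--Lions/Dunford--Pettis-type argument to get strong $L^1_{loc}$ convergence $u_i^\varepsilon\to u_i$ (along a subsequence). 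Strong convergence of $u_i^\varepsilon$ upgrades convergence of $v^\varepsilon$ and $\nabla v^\varepsilon$ (the Newtonian potential is a bounded operator in the relevant spaces), which is exactly what is needed to pass to the limit in the nonlinear term $u_i^\varepsilon\nabla v^\varepsilon$ in the weak formulation; lower semicontinuity of convex functionals handles the entropy bound and the dissipation term in the limit, yielding the stated regularity of the weak solution.

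The main obstacle I expect is the tightness/no-concentration issue: a priori the entropy bound is only an $L\log L$ bound, which does not by itself preclude a fraction of mass concentrating into a Dirac mass in the limit, nor does it immediately give the strong $L^1$ convergence required to pass to the limit in the quadratic drift term. The role of hypotheses (\ref{crutial2})--(\ref{berra}) — which keep the full region (\ref{region}) strictly below the threshold curve — is precisely to make the coefficients $\mu(1/\chi_1-1/a)$ and $(1/\chi_2-1/b)$ in estimate (\ref{te}) strictly positive with room to spare, so one can in fact propagate a slightly superlinear Orlicz bound (e.g. $u_i\log(1+u_i)$ with a time-dependent but finite constant, or a bound on $\int u_i^{1+\delta}$ obtained by interpolating the entropy bound against the dissipation via a Gagliardo--Nirenberg inequality) and thereby rule out concentration; this is the technically delicate point. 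The remaining steps — verifying the regularized problem is well posed, checking the energy and moment identities survive the regularization with controlled error, and the routine density/lower-semicontinuity arguments — are standard and I would treat them briefly, citing the analogous construction in \cite{BDP} and \cite{CEV}.
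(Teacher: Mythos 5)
Your proposal follows essentially the same route as the paper: regularize the kernel, derive uniform-in-$\varepsilon$ entropy, second-moment and dissipation estimates under (\ref{crutial2})--(\ref{berra}), upgrade to higher integrability of $u_i^{\varepsilon}$ via Gagliardo--Nirenberg applied to $\sqrt{u_i^{\varepsilon}}$, obtain strong compactness by an Aubin--Lions argument with tightness supplied by the second moment, and pass to the limit in $u_i^{\varepsilon}\nabla v^{\varepsilon}$ using HLS-type bounds on $\nabla v^{\varepsilon}$. The only cosmetic differences are that you phrase the compactness step in $L^1$/Dunford--Pettis language while the paper works in $L^2$-based spaces, and that the concentration obstacle you flag is resolved exactly as you suggest, by the $L^{1+\delta}$ (indeed all $L^p$) bounds obtained from interpolating the dissipation estimate.
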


Before giving the proof, let us first give some explanations on this result.
Inequality (\ref{crutial2}) corresponds to the interior of a rotated
parabola in the plane $\theta _{1}\theta _{2}.$ Choosing the parameters $\mu
,$ $\chi _{1}$ and $\chi _{2}$ adequately condition (\ref{berra}) may be
relevant or can be simply ignored. Next figure illustrates the two possible
cases:

\begin{center}
\scalebox{0.7}{
\psset{yunit=0.5cm}
\begin{pspicture}(-2,-0.5)(12,12)\psaxes[ticks=none,labels=none]{->}(0,0)(0,-0.5)(12,12)
\pscurve(0,0)(-1,8)(0,9)(11,0)
\pscurve(0,0)(0,9)(2,10)(11,0)
\uput[l](0,9.5){\small $8\pi/\chi_2$}
\uput[d](11,0){\small$8\pi\mu/\chi_1$}
\psline[linestyle=dashed](0,9)(12,9)
\psline[linestyle=dashed](11,0)(11,10)
\end{pspicture}}
\end{center}


More precisely we have that,

\begin{itemize}
\item If the parabola,%
\begin{equation}
8\pi \left( \frac{\theta _{1}}{\chi _{1}}\mu +\frac{\theta _{2}}{\chi _{2}}%
\right) -\left( \theta _{1}+\theta _{2}\right) ^{2}=0  \label{t}
\end{equation}
intersects any of the lines $\theta _{1}=8\pi \mu /\chi _{1}$ or $\theta
_{2}=8\pi /\chi _{2}$ in the first quadrant of the $\theta _{1}\theta _{2}$
plane, (which happens exactly when $\chi _{1}<\mu \chi _{2}/2$ or $\chi
_{1}>2\mu \chi _{2}$) and $\theta _{1},$ $\theta _{2}$ satisfies
inequalities (\ref{crutial2}) and (\ref{berra}) then system (\ref{s}) has a
global in time weak solution.

\item However, if the parabola (\ref{t}) do not intersect any of the lines $%
\theta _{1}=8\pi \mu /\chi _{1}$ or $\theta _{2}=8\pi /\chi _{2}$ (when $\mu
\chi _{2}/2\leq \chi _{1}\leq 2\mu \chi _{2})$ in the first quadrant of the $%
\theta _{1}\theta _{2}$ plane, and $\theta _{1},$ $\theta _{2}$ satisfies
inequality (\ref{crutial2}), then system (\ref{s}) has a global in time weak
solution.
\end{itemize}

On the other hand we point out that all of our results are formal so far. In
order to give them rigorousness, we should have a local existence result of
smooth solutions. However we will take another strategy which will allow us
to obtain directly global existence in time of weak solutions with the
corresponding mathematical rigorosity. In order to prove Th.\ref{global} ,
we first modify the convolution kernel $k^{0}(z)=-\frac{1}{2\pi }\log
\left\vert z\right\vert $\ in (\ref{s}), by truncating it around zero. This
last will allows us to get a regularized version of system (\ref{s}) which
is rather easier to work. After proving the existence of global solutions of
this last approximate problem, we look \ for uniform estimates of the
solutions and then a pass to the limit will give us the result of global
existence we are looking for. After getting this result we recover
properties such as mass conservation or the second moment formula by
"testing" properly our weak solution. A similar technique was made in the
one chemotaxis species case (see \cite{BCM}, \cite{BDP}).\bigskip

\begin{proof}[Proof (Sketch)]
For the reader's convenience we divide the proof in four steps giving
special attention where technical difficulties arise in comparison to the
single species case.\newline
Step 1. \textit{Regularization of the system}. We define $K^{\epsilon }$ by $%
K^{\epsilon }\left( z\right) :=K^{1}\left( \frac{z}{\epsilon }\right) ,$
where $K^{1}$ is a radial monotone non-decreasing smooth function
satisfying, 
\begin{equation*}
K^{1}\left( z\right) =\left\{ 
\begin{array}{ccc}
-\frac{1}{2\pi }\log \left\vert z\right\vert & if & \left\vert z\right\vert
\geq 4 \\ 
0 & if & \left\vert z\right\vert \leq 1,%
\end{array}%
\right.
\end{equation*}%
assume also that 
\begin{equation*}
\left\vert \nabla K^{1}\left( z\right) \right\vert \leq \frac{1}{2\pi
\left\vert z\right\vert }
\end{equation*}%
\begin{equation*}
K^{1}\left( z\right) \leq -\frac{1}{2\pi }\log \left\vert z\right\vert ;%
\text{ \ \ }-\Delta K^{1}\left( z\right) \geq 0;\text{, \ \ \ }\forall z\in 
\mathbb{R}
^{2}
\end{equation*}%
for any $z\in 
\mathbb{R}
^{2}.$ .Then we consider the following regularized version of system (\ref{s}%
),%
\begin{equation}
\left\{ 
\begin{array}{l}
\partial _{t}u_{1}^{\epsilon }=\Delta u_{1}^{\epsilon }-\chi _{1}\nabla
\cdot (u_{1}^{\epsilon }\nabla v^{\epsilon }),\text{ \ \ \ }\ t\geq 0,\text{
\ \ \ \ }x\in 
\mathbb{R}
^{2} \\ 
\partial _{t}u_{2}^{\epsilon }=\Delta u_{2}^{\epsilon }-\chi _{2}\nabla
\cdot (u_{2}^{\epsilon }\nabla v^{\epsilon }) \\ 
v^{\epsilon }=K^{\epsilon }\ast \left( u_{1}^{\epsilon }+u_{2}^{\epsilon
}\right) .%
\end{array}%
\right.  \label{el}
\end{equation}%
which we interpret in the distribution sense. Since $K^{\varepsilon }(z)$=$%
K^{1}(\frac{z}{\varepsilon })$ we also have,%
\begin{equation}
\left\vert \nabla K^{\varepsilon }\left( z\right) \right\vert =\frac{1}{%
\varepsilon }\left\vert \nabla K\left( \frac{z}{\varepsilon }\right)
\right\vert \leq \frac{1}{\varepsilon }\frac{1}{2\pi \left\vert
z/\varepsilon \right\vert }=\frac{1}{2\pi \left\vert z\right\vert }.
\label{tt}
\end{equation}%
The proof of global solutions in $L^{2}\left( 0,T;H^{1}(%
\mathbb{R}
^{2}\right) \cap C\left( 0,T;L^{2}(%
\mathbb{R}
^{2}\right) )$ for system (\ref{el}) with initial data in $L^{2}(%
\mathbb{R}
^{2})$ follows essentially the same lines as in \cite[Prop. 2.8]{BDP} and
therefore we omit the proof here\newline
Step 2. \textit{A priori estimates for the approximate solutions }$%
u_{1}^{\varepsilon },$ $u_{2}^{\varepsilon }$ \textit{and} $v^{\varepsilon
}. $ \newline
Consider a solution $\left( u_{1}^{\epsilon },u_{2}^{\epsilon }\right) $ of
the regularized system. If 
\begin{equation*}
\theta _{1}<\frac{8\pi }{\chi _{1}}\mu ;\text{ \ \ }\theta _{2}<\frac{8\pi }{%
\chi _{2}};\text{ \ \ \ \ }8\pi \left( \frac{\theta _{1}}{\chi _{1}}\mu +%
\frac{\theta _{2}}{\chi _{2}}\right) -\left( \theta _{1}+\theta _{2}\right)
^{2}\geq 0,
\end{equation*}%
then, uniformly as $\epsilon \rightarrow 0$, with bounds depending only upon 
$\int\limits_{%
\mathbb{R}
^{2}}\left( 1+\left\vert x\right\vert ^{2}\right) u_{i0}dx$ and $%
\int\limits_{%
\mathbb{R}
^{2}}u_{i0}\log u_{i0}dx$ with $i=1,2$, we have:\newline

\begin{enumerate}
\item[(i)] The function $\left( x,t\right) \rightarrow \left\vert
x\right\vert ^{2}\left( u_{1}^{\epsilon }+u_{2}^{\epsilon }\right) $ is
bounded in $L^{\infty }\left( 
\mathbb{R}
_{loc}^{+};L^{1}\left( 
\mathbb{R}
^{2}\right) \right) $

\item[(ii)] The functions $t\rightarrow \int\limits_{%
\mathbb{R}
^{2}}u_{j}^{\epsilon }\left( x,t\right) \log u_{j}^{\epsilon }\left(
x,t\right) dx$ and $t\rightarrow \int\limits_{%
\mathbb{R}
^{2}}u_{j}^{\epsilon }\left( x,t\right) v^{\epsilon }\left( x,t\right) dx$
are bounded for $j=1,2$

\item[(iii)] The function $\left( x,t\right) \rightarrow u_{j}^{\epsilon
}\left( x,t\right) \log \left( u_{j}^{\epsilon }\left( x,t\right) \right) $
is bounded in $L^{\infty }\left( 
\mathbb{R}
_{loc}^{+};L^{1}\left( 
\mathbb{R}
^{2}\right) \right) $ for $j=1,2$

\item[(iv)] The function $\left( x,t\right) \rightarrow \nabla \sqrt{%
u_{j}^{\epsilon }\left( x,t\right) }$ is bounded in $L^{2}\left( 
\mathbb{R}
_{loc}^{+}\times 
\mathbb{R}
^{2}\right) $ for $j=1,2$

\item[(v)] The function $\left( x,t\right) \rightarrow u_{j}^{\epsilon
}\left( x,t\right) $ is bounded in $L^{2}\left( 
\mathbb{R}
_{loc}^{+}\times 
\mathbb{R}
^{2}\right) $ for $j=1,2$

\item[(vi)] The function $\left( x,t\right) \rightarrow u_{j}^{\epsilon
}\left( x,t\right) \Delta v^{\epsilon }\left( x,t\right) $ is bounded in $%
L^{1}\left( 
\mathbb{R}
_{loc}^{+}\times 
\mathbb{R}
^{2}\right) $ for $j=1,2$

\item[(vii)] The function $\left( x,t\right) \rightarrow \sqrt{%
u_{j}^{\epsilon }\left( x,t\right) }\nabla v^{\epsilon }\left( x,t\right) $
is bounded in $L^{2}\left( 
\mathbb{R}
_{loc}^{+}\times 
\mathbb{R}
^{2}\right) $ for $j=1,2$\newline
The proof of estimates (i)-(vii) follows essentially the same steps as in
the one species case and therefore we remit the reader to \cite[Lema 2.11]%
{BDP}.\newline
In addition we note that from Gagliardo-Nierenberg-Sobolev inequality,%
\begin{equation*}
\left\Vert g\right\Vert _{L^{p}\left( 
\mathbb{R}
^{2}\right) }^{2}\leq C_{GNS}^{\left( p\right) }\left\Vert \nabla
g\right\Vert _{L^{2}\left( 
\mathbb{R}
^{2}\right) }^{2-\frac{4}{p}}\left\Vert g\right\Vert _{L^{2}\left( 
\mathbb{R}
^{2}\right) }^{\frac{4}{p}}\text{, }\forall g\in H^{1}\left( 
\mathbb{R}
^{2}\right) \text{, }\forall p\in \left[ 2,\infty \right)
\end{equation*}%
with $g=\sqrt{u^{\varepsilon }}$ we obtain, 
\begin{equation}
\int\limits_{%
\mathbb{R}
^{2}}\left\vert u_{i}^{\epsilon }\right\vert ^{p/2}dx\leq \left(
C_{GNS}^{\left( p\right) }\right) ^{\frac{p}{2}}\theta _{i}\left\Vert \nabla 
\sqrt{u_{i}^{\varepsilon }}\right\Vert _{L^{2}\left( 
\mathbb{R}
^{2}\right) }^{p-2}  \label{d}
\end{equation}%
for any $p>2$. Estimation (iv) along with (\ref{d}) implies that $%
u_{i}^{\epsilon }$ is uniformly bounded in $\varepsilon $ in $L^{q}\left( 
\mathbb{R}
_{loc}^{+}\times 
\mathbb{R}
^{2}\right) $ for every $q\in \lbrack 1,\infty ).$ \ Therefore we have
proved the following.

\item[(viii)] The function $\left( x,t\right) \rightarrow u_{j}^{\epsilon
}\left( x,t\right) $ is bounded in $L^{p}\left( 
\mathbb{R}
_{loc}^{+}\times 
\mathbb{R}
^{2}\right) $ for $j=1,2,$ $p\geq 1.$
\end{enumerate}

Step 3. \textit{Construction of a strong convergence subsequence in} $L^{p}:$
\ To achieve our aim in this step we will apply the Aubin-Lions compactness
Lemma.\newline
First we get a uniform bound on $\left\Vert \nabla u_{i}^{\epsilon
}\right\Vert _{L_{loc}^{2}((\delta ,T)\times B_{i})}$. We observe that%
\begin{eqnarray}
\frac{d}{dt}\int\limits_{%
\mathbb{R}
^{2}}\left\vert u_{i}^{\epsilon }\right\vert ^{2}dx &=&-2\int\limits_{%
\mathbb{R}
^{2}}\left\vert \nabla u_{i}^{\epsilon }\right\vert ^{2}dx+2\chi
_{1}\int\limits_{%
\mathbb{R}
^{2}}u_{i}^{\epsilon }\nabla u_{i}^{\epsilon }\cdot \nabla v^{\epsilon }dx 
\notag \\
&\leq &-2\int\limits_{%
\mathbb{R}
^{2}}\left\vert \nabla u_{i}^{\epsilon }\right\vert ^{2}dx+2\chi _{1}\left(
\int\limits_{%
\mathbb{R}
^{2}}\left\vert \nabla u_{i}^{\epsilon }\right\vert ^{2}\right) ^{1/2}\left(
\int\limits_{%
\mathbb{R}
^{2}}\left\vert u_{i}^{\epsilon }\right\vert ^{2}\left\vert \nabla
v^{\epsilon }\right\vert ^{2}dx\right) ^{1/2}  \notag \\
&\leq &-2\int\limits_{%
\mathbb{R}
^{2}}\left\vert \nabla u_{i}^{\epsilon }\right\vert ^{2}dx+2\chi _{1}\left(
\int\limits_{%
\mathbb{R}
^{2}}\left\vert \nabla u_{i}^{\epsilon }\right\vert ^{2}\right) ^{1/2}\left(
\int\limits_{%
\mathbb{R}
^{2}}\left\vert u_{i}^{\epsilon }\right\vert ^{3}dx\right) ^{1/3}\left(
\int\limits_{%
\mathbb{R}
^{2}}\left\vert \nabla v^{\epsilon }\right\vert ^{6}dx\right) ^{1/6},
\label{4}
\end{eqnarray}%
where we have used Hölder inequality in the last line. The classical
Gagliardo-Nirenberg-Sobolev inequality along with the Calderon-Zigmund
inequality allow us to conclude that 
\begin{equation}
\left( \int\limits_{%
\mathbb{R}
^{2}}\left\vert \nabla v^{\epsilon }\right\vert ^{6}dx\right) ^{1/6}\leq
C\left( \int\limits_{%
\mathbb{R}
^{2}}\left\vert \Delta v^{\epsilon }\right\vert ^{3/2}dx\right) ^{2/3}.
\label{5}
\end{equation}%
From ineq. (\ref{4}) and (\ref{5}) we conclude that,%
\begin{eqnarray*}
&&\frac{d}{dt}\int\limits_{%
\mathbb{R}
^{2}}\left\vert u_{i}^{\epsilon }\right\vert ^{2}dx \\
&\leq &-2\int\limits_{%
\mathbb{R}
^{2}}\left\vert \nabla u_{i}^{\epsilon }\right\vert ^{2}dx+2C\chi _{1}\left(
\int\limits_{%
\mathbb{R}
^{2}}\left\vert \nabla u_{i}^{\epsilon }\right\vert ^{2}\right) ^{1/2}\left(
\int\limits_{%
\mathbb{R}
^{2}}\left\vert u_{i}^{\epsilon }\right\vert ^{3}dx\right) ^{1/3}\left(
\int\limits_{%
\mathbb{R}
^{2}}\left\vert \Delta v^{\epsilon }\right\vert ^{3/2}dx\right) ^{2/3} \\
&\leq &-2\int\limits_{%
\mathbb{R}
^{2}}\left\vert \nabla u_{i}^{\epsilon }\right\vert ^{2}dx \\
&&+2C\chi _{1}\left( \int\limits_{%
\mathbb{R}
^{2}}\left\vert \nabla u_{i}^{\epsilon }\right\vert ^{2}\right) ^{1/2}\left(
\int\limits_{%
\mathbb{R}
^{2}}\left\vert u_{i}^{\epsilon }\right\vert ^{3}dx\right) ^{1/3}\left(
\left( \int\limits_{%
\mathbb{R}
^{2}}\left\vert u_{1}^{\epsilon }\right\vert ^{3/2}dx\right) ^{2/3}+\left(
\int\limits_{%
\mathbb{R}
^{2}}\left\vert u_{2}^{\epsilon }\right\vert ^{3/2}dx\right) ^{2/3}\right) ,
\end{eqnarray*}%
Integrating respect to $t$ and reordening last inequality we obtain now,%
\begin{multline*}
2\int_{0}^{T}\int\limits_{%
\mathbb{R}
^{2}}\left\vert \nabla u_{i}^{\epsilon }\right\vert ^{2}dxdt \\
-2C\chi _{1}\left\{ \sup_{t\in \left[ 0,T\right] }\left( \int\limits_{%
\mathbb{R}
^{2}}\left\vert u_{i}^{\epsilon }\right\vert ^{3}dx\right) ^{1/3}\left(
\sup_{t\in \left[ 0,T\right] }\left( \int\limits_{%
\mathbb{R}
^{2}}\left\vert u_{1}^{\epsilon }\right\vert ^{3/2}dx\right)
^{2/3}+\sup_{t\in \left[ 0,T\right] }\left( \int\limits_{%
\mathbb{R}
^{2}}\left\vert u_{2}^{\epsilon }\right\vert ^{3/2}dx\right) ^{2/3}\right)
\right\} \int_{0}^{T}\left( \int\limits_{%
\mathbb{R}
^{2}}\left\vert \nabla u_{i}^{\epsilon }\right\vert ^{2}\right) ^{1/2}dt \\
+\int\limits_{%
\mathbb{R}
^{2}}\left\vert u_{i}^{\epsilon }\right\vert ^{2}dx-\int\limits_{%
\mathbb{R}
^{2}}\left\vert u_{i}^{\epsilon }(x,0)\right\vert ^{2}dx\leq 0.
\end{multline*}%
We observe now that,%
\begin{equation*}
\int_{0}^{T}\left( \int\limits_{%
\mathbb{R}
^{2}}\left\vert \nabla u_{i}^{\epsilon }\right\vert ^{2}dx\right)
^{1/2}dt\leq T^{1/2}\left( \int_{0}^{T}\int\limits_{%
\mathbb{R}
^{2}}\left\vert \nabla u_{i}^{\epsilon }\right\vert ^{2}dxdt\right) ^{1/2}
\end{equation*}%
Denoting by $X:=\left\Vert \nabla u_{i}^{\epsilon }\right\Vert
_{L_{loc}^{2}((\delta ,T)\times 
\mathbb{R}
^{2})},$ we conclude from last inequality that for positive constants $a$,$b$
and $c$ we have that,%
\begin{equation*}
aX^{2}-bX+c\leq 0,
\end{equation*}%
in consequence $X:=\left\Vert \nabla u_{i}^{\epsilon }\right\Vert
_{L_{loc}^{2}((\delta ,T)\times 
\mathbb{R}
^{2})}$ is bounded, i.e there exist a constant $C$ such that,%
\begin{equation}
\left\Vert \nabla u_{i}^{\epsilon }\right\Vert _{L_{loc}^{2}((\delta
,T)\times 
\mathbb{R}
^{2})}\leq C.  \label{e1}
\end{equation}%
Now we obtain a bound for $\left\Vert du_{i}^{\varepsilon }/dt\right\Vert
_{L^{2}((0,T);H^{-1}(%
\mathbb{R}
^{2}))}:$\newline
Let $\phi \in H^{1}(%
\mathbb{R}
^{2})$ then we have,%
\begin{eqnarray}
\left\vert \left\langle du_{i}^{\varepsilon }/dt,\phi \right\rangle
\right\vert &=&\left\vert \left\langle \Delta u_{i}-\nabla \cdot \left(
u_{i}\nabla \psi _{i}\right) ,\phi \right\rangle \right\vert \leq \left\vert
\left\langle \nabla u_{i},\nabla \phi \right\rangle \right\vert +\left\vert
\left\langle u_{i}\nabla \psi _{i},\nabla \phi \right\rangle \right\vert 
\notag \\
&\leq &\left\Vert \nabla \phi \right\Vert \left\Vert \nabla u_{i}\right\Vert
+\left\Vert \nabla \phi \right\Vert \left\Vert u_{i}\nabla \psi
_{i}\right\Vert .  \label{8}
\end{eqnarray}%
Thus,%
\begin{equation*}
\left\Vert du_{i}^{\varepsilon }/dt\right\Vert _{H^{-1}(%
\mathbb{R}
^{2})}=\sup_{\left\Vert \phi \right\Vert _{H^{1}(%
\mathbb{R}
^{2})}=1}\left\vert \left\langle du_{i}^{\varepsilon }/dt,\phi \right\rangle
\right\vert \leq \left\Vert \nabla u_{i}^{\varepsilon }\right\Vert _{L^{2}(%
\mathbb{R}
^{2})}+\left\Vert u_{i}^{\varepsilon }\nabla \psi _{i}\right\Vert _{L^{2}(%
\mathbb{R}
^{2})}\leq C.
\end{equation*}%
From the last estimate it follows that,%
\begin{equation}
\left\Vert du_{i}^{\varepsilon }/dt\right\Vert _{L^{2}((0,T);H^{-1}(%
\mathbb{R}
^{2}))}=\left( \int_{0}^{T}\left\Vert du_{i}^{\varepsilon }/dt\right\Vert
_{H^{-1}(%
\mathbb{R}
^{2})}^{2}\right) ^{1/2}\leq C  \label{e2}
\end{equation}%
Compactness: In order to apply the Aubin-Lions Lemma we would like to have
compactness in the containence $L^{2}(%
\mathbb{R}
^{2})\hookrightarrow H^{1}(%
\mathbb{R}
^{2})$, however this is not thrue. However we can take advantage of the fact
that the second moment of each $u_{i}^{\varepsilon },$ $\varepsilon >0,$ is
bounded uniformly in $\varepsilon $. This last fact will allow us to obtain
equicontinuity at infinity for the sequences $\left\{ u_{i}^{\varepsilon
}\right\} _{\varepsilon >0},$ $i=1,2,$ which is the basic ingredient to
translate the compactness result of Rellich-Kondrachov from bounded to
unbounded domains (cf.\cite[Corollary 5.3.1]{Attouch}). We define the spaces 
$B_{0}=H^{1}(%
\mathbb{R}
^{2})\cap \left\{ \left. f\right\vert \text{ \ }\left\vert x\right\vert
^{2}f\in L^{1}(%
\mathbb{R}
^{2})\right\} ,$ $B:=L^{2}(%
\mathbb{R}
^{2})$ and $B_{1}:=B_{0}^{\prime }.$ Let $\left\{ f_{i}\right\} $ and
arbitrary bounded sequence in $B,$ then we have $L^{2}$-equi-integrability
at infinity as the following account shows: 
\begin{eqnarray*}
\int_{\left\{ \left\vert x\right\vert >R\right\} }f_{i}^{2}dx &\leq &\frac{1%
}{R}\int_{\left\{ \left\vert x\right\vert >R\right\} }\left( \left\vert
x\right\vert f_{i}^{1/2}\right) f_{i}^{3/2}dx\leq \frac{1}{R}\left(
\int_{\left\{ \left\vert x\right\vert >R\right\} }\left\vert x\right\vert
^{2}f_{i}dx\right) ^{1/2}\left( \int_{\left\{ \left\vert x\right\vert
>R\right\} }f_{i}^{3}dx\right) ^{1/2} \\
&\leq &\frac{1}{R}\left( \int_{%
\mathbb{R}
^{2}}\left\vert x\right\vert ^{2}f_{i}dx\right) ^{1/2}\left( \int_{%
\mathbb{R}
^{2}}f^{3}dx\right) ^{1/2}
\end{eqnarray*}%
From the Gagliardo-Nirenberg interpolation inequality (cf. \cite[Th. 9.3]%
{Friedman}) with $p=3,r=q=2,j=0,n=2,m=1$ and $a=1/3$, we have that 
\begin{equation}
\left\Vert u\right\Vert _{3}\leq C\left\Vert \nabla u\right\Vert
_{2}^{1/3}\left\Vert u\right\Vert _{2}^{2/3}\text{ \ for all }u\in
C_{0}^{\infty }(%
\mathbb{R}
^{2})  \label{6}
\end{equation}%
Inequality (\ref{6}) holds also in $H^{1}(%
\mathbb{R}
^{2})$, therefore%
\begin{equation*}
\int_{\left\{ \left\vert x\right\vert >R\right\} }f_{i}^{2}dx\leq \frac{C}{R}%
\left( \int_{%
\mathbb{R}
^{2}}\left\vert x\right\vert ^{2}f_{i}dx\right) ^{1/2}\left( \int_{%
\mathbb{R}
^{2}}\left\vert \nabla f_{i}\right\vert ^{2}dx\right) ^{1/2}\left( \int_{%
\mathbb{R}
^{2}}f_{i}^{2}dx\right)
\end{equation*}%
thus,%
\begin{equation}
\lim_{R\rightarrow +\infty }\int_{\left\{ \left\vert x\right\vert >R\right\}
}f_{i}^{2}dx=0\text{ \ uniformly with respect to }f_{i}  \label{e3}
\end{equation}%
From the Rellich-Kondrakov Theorem we obtain the compact inclusion,%
\begin{equation*}
B_{0}\hookrightarrow \hookrightarrow B
\end{equation*}%
Given that $u_{i}^{\varepsilon }$ satisfies (\ref{e1}), (\ref{e2}) and (\ref%
{e3}) we can invoke now the Aubin-Lions-Simon theorem to conclude that $%
u_{i}^{\varepsilon }$ has a subsequence which converge strongly in $%
L^{2}(0,T,B).$ Therefore up to a subsequence we have that,%
\begin{equation}
u_{i}^{\epsilon }\rightarrow u_{i}\text{ a.e. in }%
\mathbb{R}
^{2}\times \lbrack 0,T]  \label{9}
\end{equation}%
We have also proved uniformly boundedness for $\left\Vert u_{i}^{\epsilon
}\right\Vert _{L^{p}(%
\mathbb{R}
^{2})\times \lbrack 0,T]},$ from this, estimation (\ref{9}) and Vitali
theorem we obtain,%
\begin{equation}
u_{i}^{\epsilon }\rightarrow u_{i}\text{ \ strongly in }L^{p}(%
\mathbb{R}
^{2}\times \lbrack 0,T])\text{ for }p\geq 1  \label{11}
\end{equation}%
\newline
Step 4. \textit{Pass to the limit}. We pass now to the limit in the weak
sense to obtain our result of global existence. The most significant
technical difficulty to show that $u_{1},$ $u_{2}$ solved (\ref{s}) arise
with the nonlinear terms. In order to prove that%
\begin{equation}
u_{i}^{\epsilon }\nabla v^{\epsilon }\rightharpoonup u_{i}\nabla v,\text{in }%
D^{\prime }(%
\mathbb{R}
^{+}\times 
\mathbb{R}
^{2}),  \label{big}
\end{equation}%
we notice first that the expression $u_{i}^{\epsilon }\left\vert \nabla
v^{\epsilon }\right\vert $ is integrable as estimate (vii) of part 2 along
with the following estimate shows, 
\begin{eqnarray*}
&&\left( \int_{\left[ 0,T\right] \times 
\mathbb{R}
^{2}}u_{i}^{\epsilon }\left\vert \nabla v^{\epsilon }\right\vert dxdt\right)
^{2}=\left( \int_{\left[ 0,T\right] \times 
\mathbb{R}
^{2}}\sqrt{u_{i}^{\epsilon }}\sqrt{u_{i}^{\epsilon }}\left\vert \nabla
v^{\epsilon }\right\vert dxdt\right) ^{2} \\
&\leq &\int_{\left[ 0,T\right] \times 
\mathbb{R}
^{2}}u_{i}^{\epsilon }dxdt\int_{\left[ 0,T\right] \times 
\mathbb{R}
^{2}}u_{i}^{\epsilon }\left\vert \nabla v^{\epsilon }\right\vert
^{2}dxdt\leq \theta _{i}T\int_{\left[ 0,T\right] \times 
\mathbb{R}
^{2}}u_{i}^{\epsilon }\left\vert \nabla v^{\epsilon }\right\vert ^{2}dxdt,
\end{eqnarray*}%
It follows that we can interpret $u_{i}^{\epsilon }\nabla v^{\epsilon }$ as
an element of $\left( C_{0}^{\infty }\left( 
\mathbb{R}
^{+}\times 
\mathbb{R}
^{2}\right) \right) ^{\prime }$ and therefore it has sense its divergence. 
\newline
In order to prove that $\left\Vert \nabla v^{\varepsilon }\right\Vert
_{L^{r}(%
\mathbb{R}
^{n})}\leq C$ for $r>2$, we recall the Hardy-Littelwood-Sobolev inequality:
For all $f\in L^{p}(%
\mathbb{R}
^{n}),$ $g\in L^{q}(%
\mathbb{R}
^{n})$, $1<p,q<\infty ,$ such that $1/p+1/q+\lambda /n=2$ and $0<\lambda <n$%
, there exist a constant $C=C(p,q,\lambda )>0$ such that%
\begin{equation*}
\left\vert \int_{%
\mathbb{R}
^{n}\times 
\mathbb{R}
^{n}}\frac{1}{\left\vert x-y\right\vert ^{\lambda }}f(x)g(y)dxdy\right\vert
\leq C\left\Vert f\right\Vert _{L^{p}(%
\mathbb{R}
^{n})}\left\Vert g\right\Vert _{L^{q}(%
\mathbb{R}
^{n})}.
\end{equation*}%
Taking the supremum over the ball $\left\Vert g\right\Vert _{L^{q}(%
\mathbb{R}
^{n})}=1$ on both sides of the last inequality we obtain,%
\begin{equation}
\left\Vert \int_{%
\mathbb{R}
^{n}}\frac{1}{\left\vert x-y\right\vert ^{\lambda }}f(x)dx\right\Vert _{L^{%
\frac{q}{q-1}}(%
\mathbb{R}
^{n})}\leq C\left\Vert f\right\Vert _{L^{p}(%
\mathbb{R}
^{n})}  \label{be}
\end{equation}%
In particular%
\begin{equation*}
\left\Vert \int_{%
\mathbb{R}
^{n}}\frac{1}{\left\vert x-y\right\vert }f(x)dx\right\Vert _{L^{\frac{q}{q-1}%
}(%
\mathbb{R}
^{2})}\leq C\left\Vert f\right\Vert _{L^{p}(%
\mathbb{R}
^{2})}\text{ where }1<p,q<\infty ,\text{ and }1/p+1/q+1/2=2.
\end{equation*}%
Thus we have that,%
\begin{eqnarray}
\left\Vert \nabla v^{\varepsilon }\right\Vert _{L^{r}(%
\mathbb{R}
^{n})} &=&\left\Vert \nabla K^{\varepsilon }\ast (u_{1}^{\varepsilon
}+u_{2}^{\varepsilon })\right\Vert _{L^{r}(%
\mathbb{R}
^{n})}  \label{e4} \\
&\leq &\left\Vert \frac{1}{2\pi }\int \frac{1}{\left\vert x-y\right\vert }%
(u_{1}^{\varepsilon }+u_{2}^{\varepsilon })dx\right\Vert _{L^{r}(%
\mathbb{R}
^{n})}\leq C\left( \left\Vert u_{1}^{\varepsilon }\right\Vert _{L^{p}(%
\mathbb{R}
^{2})}+\left\Vert u_{2}^{\varepsilon }\right\Vert _{L^{p}(%
\mathbb{R}
^{2})}\right) \leq C,
\end{eqnarray}%
where we have used step 2 (viii). From $r=\frac{q}{q-1}$ and $1/p+1/q+1/2=2$
we obtain that $\frac{1}{r}=\frac{1}{p}-\frac{1}{2}.$ In \ addition $p\in
(1,2)$ implies that $r\in (2,\infty )$. We conclude that (up to a
subsequence) $\nabla v^{\varepsilon }\rightharpoonup h$, where $h$ is in $%
L^{r}.$ In order to prove that actually $h=\nabla K\ast n$ we have to do
some extra work yet. With this end in mind we propose us now to show that,%
\begin{equation}
\nabla v^{\epsilon }\rightarrow \nabla v\text{ \ \ \ \ }a.e.,  \label{bi}
\end{equation}%
\newline
We have that, 
\begin{eqnarray}
\nabla v^{\epsilon }-\nabla v &=&-\frac{1}{2\pi }\int_{%
\mathbb{R}
^{2}}\frac{x-y}{\left\vert x-y\right\vert ^{2}}\left( \left( u_{1}^{\epsilon
}+u_{2}^{\epsilon }\right) -\left( u_{1}+u_{2}\right) \right) \left(
y,t\right) dy  \notag \\
&&+\int\limits_{\left\vert x-y\right\vert \leq 2\epsilon }\left( \frac{1}{%
\varepsilon }\nabla K^{1}\left( \frac{x-y}{\varepsilon }\right) +\frac{%
\left\vert x-y\right\vert }{2\pi \left\vert x-y\right\vert ^{2}}\right)
\left( u_{1}^{\epsilon }+u_{2}^{\epsilon }\right) (y,t)dy.  \label{ba}
\end{eqnarray}%
From (\ref{11}) and (\ref{be}) we deduce that (up to a subsequence) the
first integral in (\ref{ba}) converges to zero a.e. On the other side,
estimates (\ref{tt}) allows us to conclude that%
\begin{equation*}
\left\vert \int\limits_{\left\vert x-y\right\vert \leq 2\epsilon }\left( 
\frac{1}{\varepsilon }\nabla K^{1}\left( \frac{x-y}{\varepsilon }\right) +%
\frac{\left\vert x-y\right\vert }{2\pi \left\vert x-y\right\vert ^{2}}%
\right) \left( u_{1}^{\epsilon }+u_{2}^{\epsilon }\right) (y,t)dy\right\vert
\leq \int\limits_{\left\vert x-y\right\vert \leq 2\epsilon }\left( \frac{1}{%
\pi \left\vert x-y\right\vert }\right) \left( u_{1}^{\epsilon
}+u_{2}^{\epsilon }\right) (y,t)dy
\end{equation*}%
After taking polar coordinates we observe that last integral converges to $0$
as $\varepsilon \rightarrow 0.$ Therefore we conclude (\ref{bi}). \newline
We obtain therefore from \cite[Prop. 2.46 (i)]{Fonseca} that $\nabla
v_{\varepsilon }\rightharpoonup \nabla K\ast n$ weakly in $L^{r}$ for $r\geq
2.$ Finally we choose conjugate exponents $r=4$ and $p=4/3$ to conclude the
convergence (\ref{11}).
\end{proof}

\section{\textbf{Conclusions and open questions}}

It has been proved in this paper that system (\ref{s}) has a threshold curve
that determines global existence or blow-up. A more difficult task is to
find out if the blow-up has to be simultaneous or not and also to describe
the asymptotics near the blow-up time. A first step in this direction was
given by E. Espejo, A. Stevens, J.J. L. Velazquez in \cite{EEE2}, where it
was shown that the blow-up has to be simultaneous in the radial case. Should
it be the same in the general case? \ Or Should it depend on more specific
information on the initial data? With regard to this point it is worth to
recall that according to \cite{CEV} it is possible to have blow-up even in
the case that the total moment 
\begin{equation}
m(t):=\frac{\pi }{\chi _{1}}\int_{%
\mathbb{R}
^{2}}u_{1}(x,t)\left\vert x\right\vert ^{2}dx+\frac{\pi }{\chi _{2}}\int_{%
\mathbb{R}
^{2}}u_{2}(x,t)\left\vert x\right\vert ^{2}dx  \label{dd}
\end{equation}%
is increasing, that is, when we have%
\begin{equation*}
\frac{4\pi \mu \theta _{1}}{\chi _{1}}+\frac{4\pi \theta _{2}}{\chi _{2}}-%
\frac{1}{2}(\theta _{1}+\theta _{2})^{2}>0
\end{equation*}%
This opens a new possibility: One species could be increasing meanwhile the
other decreases. That is to say the question of a simultaneous blow-up or
not as well as a possible collapse mass separation could eventually not only
depend on the symmetry of the initial data but also on the $L^{1}$ size of
the initial data.\bigskip

On the other side if the parabola, 
\begin{equation}
\frac{4\pi \mu \theta _{1}}{\chi _{1}}+\frac{4\pi \theta _{2}}{\chi _{2}}-%
\frac{1}{2}(\theta _{1}+\theta _{2})^{2}=0,  \label{c}
\end{equation}%
intersects any of the line lines,\ 
\begin{equation}
\text{\ }\theta _{1}=\frac{8\pi }{\chi _{1}}\text{ \ \ or \ }\theta _{2}=%
\frac{8\pi }{\chi _{2}}.  \label{c2}
\end{equation}%
it would be very interesting to study the behavior of system (\ref{s}) on
this lines. Here it is worth to recall that the proof of convergence toward
a delta function at $T=\infty $\ in the one species case, when total mass is
exactly $8\pi /\chi ,$\ uses in a essential way that the second moment is
preserved (see for instance \cite{BCM} ). In contrast for the two species
case, the rotated parabola (\ref{c}) can intersect any of the lines (\ref{c2}%
) and then we obtain threshold lines on which the second moment is not
preserved. A description of the asymptotic behavior in this case seems to
require rather different techniques to those used in the one species case.

\end{document}